\DeclareMathOperator*{\argmax}{argmax}
\newtheorem{theorem}{Theorem}[section]
\newtheorem{lemma}[theorem]{Lemma}
\newtheorem{cor}[theorem]{Corollary}
\theoremstyle{definition}
\newtheorem{definition}[theorem]{Definition}
\newtheorem{example}[theorem]{Example}
\theoremstyle{remark}
\newtheorem{remark}[theorem]{Remark}
\numberwithin{equation}{section}
\begin{document}

\title[Parametric Interpolation and Non-convex Flux Functions]{Parametric Interpolation Framework for 1-D Scalar Conservation Laws with Non-convex Flux Functions}

\author{Geoffrey McGregor}
\address{Department of Mathematics, McGill University, Montreal, QC, Canada.}
\curraddr{805 Sherbrooke St W, Montreal, QC H3A 0B9, Canada.}
\email{Geoffrey.McGregor@mail.mcgill.ca}
\thanks{The research of GMc was supported in part by the Schulich Scholarship at McGill University and the Murata Family Fellowship.}

\author{Jean-Christophe Nave}
\address{Department of Mathematics, McGill University, Montreal, QC, Canada.}
\curraddr{805 Sherbrooke St W, Montreal, QC H3A 0B9, Canada.}
\email{jcnave@math.mcgill.ca}
\thanks{The research of JCN was supported in part by the NSERC Canada Discovery Grants Program. }

\subjclass[2010]{65M25, 35L65, 35L67}

\date{\today}


\keywords{Conservation laws, Method of Characteristics, Interpolation.}

\begin{abstract}
In this paper we present a novel framework for obtaining high order numerical methods for 1-D scalar conservation laws with non-convex flux functions. When solving Riemann problems, the Oleinik entropy condition, \cite{Oleinik}, is satisfied when the resulting shocks and rarefactions correspond to correct portions of the appropriate (upper or lower) convex envelope of the flux function. We show that the standard equal-area principle fails to select these solutions in general, and therefore we introduce a generalized equal-area principle which always selects the weak solution corresponding to the correct convex envelope.  The resulting numerical scheme presented here relies on the area-preserving parametric interpolation framework introduced in \cite{mcgregor2019area} and locates shock position to fifth order in space, conserves area exactly and admits weak solutions which satisfy the Oleinik entropy condition numerically regardless of the initial states. 
\end{abstract}

\maketitle

\section{Introduction}\label{Intro}
In this paper we consider the 1-D scalar conservation law,\\
\begin{equation}
\begin{cases}
u_t+(F(u))_x=0\label{Cauchy}\\
u(x,0)=g(x),
\end{cases}
\end{equation}
where $F$ is smooth and $g$ is piecewise smooth in their respective domains. In particular this article focuses on the case where the flux function $F$ is non-convex. 

The goal in the present work is to introduce a novel framework for numerically solving (\ref{Cauchy}) to high order while preserving exact conservation and ensuring discontinuities satisfy the Oleinik entropy condition of \cite{Oleinik}. In particular the framework presented in this paper captures shock position and rarefaction waves to high order through the use of a generalized equal-area principle and a parametric representation of the characteristic curves associated with (\ref{Cauchy}).

Removing the convexity requirement on the flux function in (\ref{Cauchy}) adds considerable complexity to weak solutions. In particular, when solving a Riemann problem in the convex case,  the result is either a rarefaction wave, or a shock wave. For more on the convex case, see \cite{Lax,Evans,LeVeque}. In the non-convex case however, Riemann problems may result in a sequence of rarefaction and shock waves linked together. The resulting sequence of shocks and rarefactions is solely determined by the initial states and the flux function $F$. As discussed in \cite{LevFinite,LeFloch}, one way of constructing weak solutions which satisfy the required admissibility condition as given by Oleinik in \cite{Oleinik} is through the convex envelope of $F$. In \cite{LeFloch}, the authors propose a numerical method for constructing weak solutions of (\ref{Cauchy}) for both the convex and non-convex cases. Their Convex Hull Algorithm, denoted CHA, utilizes the method of characteristics along with a convex hull transformation to construct the corresponding shock and rarefaction profiles. The open-source software utilized in \cite{LeFloch} to construct the convex hull within the Convex Hull Algorithm can be found through a link in their paper. In \cite{Seibold}, the authors present an exactly conservative numerical method also relying on the method of characteristics. Although \cite{Seibold} mostly treats the convex case, the method indeed can be applied to the non-convex case with a few tweaks to the algorithm. Numerical simulations showed that their method achieved first order accuracy in the non-convex case. The authors conjectured that the reduction of one order from the convex case was a result of the inflection point in the flux function.

Similar to the work presented in \cite{LeFloch,Seibold}, we seek a method which relies on the characteristic equations associated with (\ref{Cauchy}), given by
\begin{align}
\dot{x}&=F'(u) \label{CharEq}\\
\dot{u}&=0, \nonumber
\end{align}
which can be solved exactly,
\begin{align}
x(x_0,t)&=x_0+F'(g(x_0))t\nonumber\\
u(x_0,t)&=g(x_0),\label{HomParCurve}
\end{align}
where $g(x)$ is the given initial condition of (\ref{Cauchy}). Written as a planar curve parametrized by $x_0$, the solution to (\ref{CharEq}) is given by

\begin{align}
\langle x(x_0,t) \, ,\, u(x_0) \rangle=\langle x_0+F'(g(x_0))t \, , \, g(x_0) \rangle.\label{ParCurve}
\end{align}
The curve $\langle x(x_0,t) \, ,\, u(x_0) \rangle$ remains a parametrization of the strong solution to (\ref{Cauchy}) up to some time $t^*$, provided $\frac{\partial}{\partial x_0}x(x_0,t)>0$ for all $x_0$ in the computation domain and $0\leq t<t^*$. If at some point $x_0^*$ we have $\frac{\partial}{\partial x_0}x(x_0^*,t)=1+F''(g(x_0^*))g'(x_0^*)t<0$, for some $t>0$, then the parametric curve becomes multi-valued and a projection is required to recover the appropriate weak solution to (\ref{Cauchy}). In the convex case, as discussed in \cite{LevFinite} and \cite{mcgregor2019parametric}, the correct weak solution can be obtained through an equal-area projection (also known as the equal-area principle), see Figure \ref{Proj Sketch} for an illustration. The vertical line in Figure \ref{Proj Sketch} will move at the correct shock speed given by the Rankine-Hugoniot condition  \cite{Rankine,Hugo}, provided the regions on either side of the vertical line have the same area. For a proof of the equal-area principle in the convex case see \cite{mcgregor2019parametric}. One objective of the present work is to investigate the link between the equal-area principle and the convex hull of the flux function $F$. Similar to the convex case, an appropriate application of the equal-area principle requires a high order and precise representation of the characteristic curve obtained by solving (\ref{CharEq}). To achieve this we turn to the parametric interpolation literature.
\begin{figure}[!ht]
\begin{center}
\includegraphics[width=40mm,height=30mm]{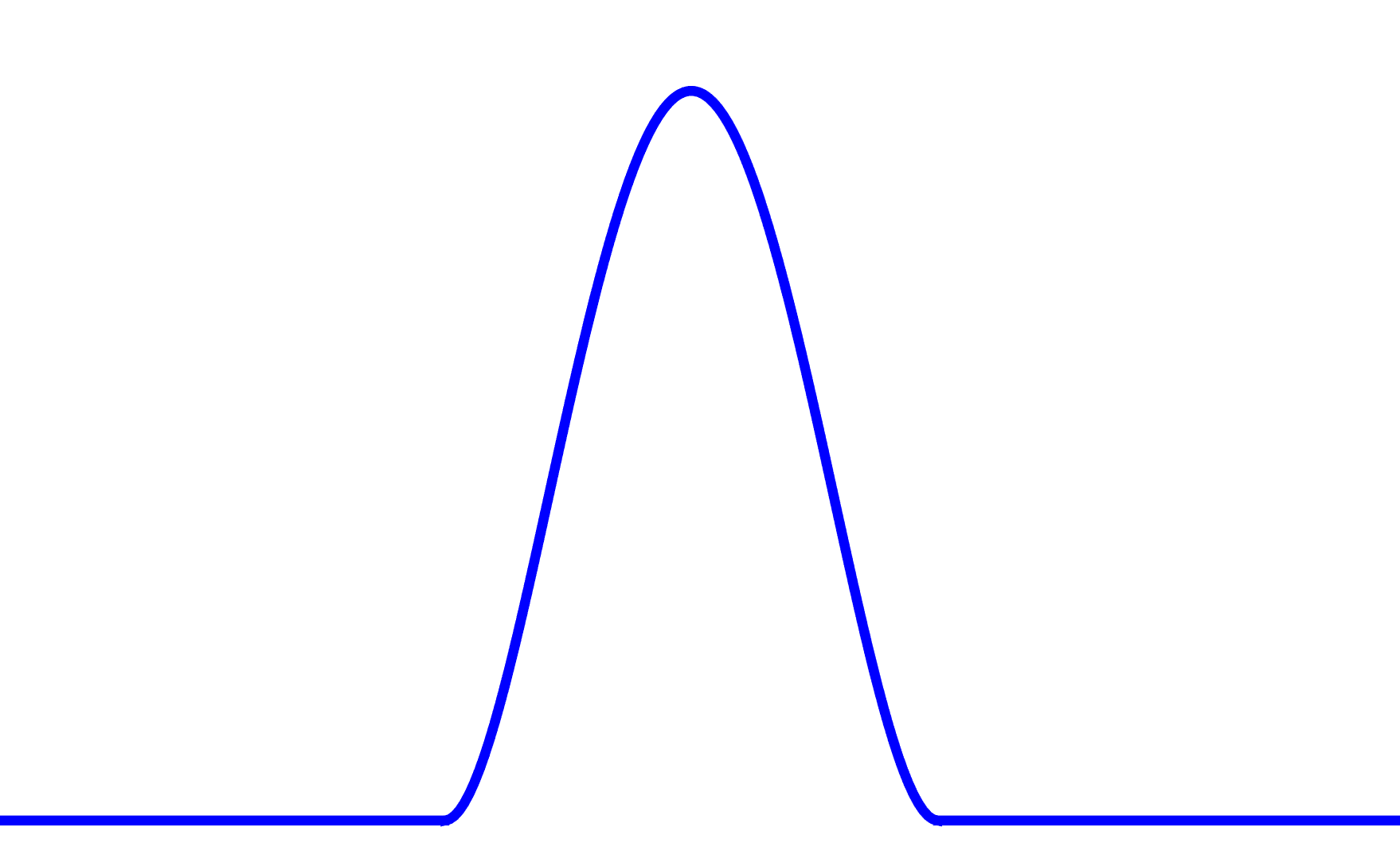}
\includegraphics[width=40mm,height=30mm]{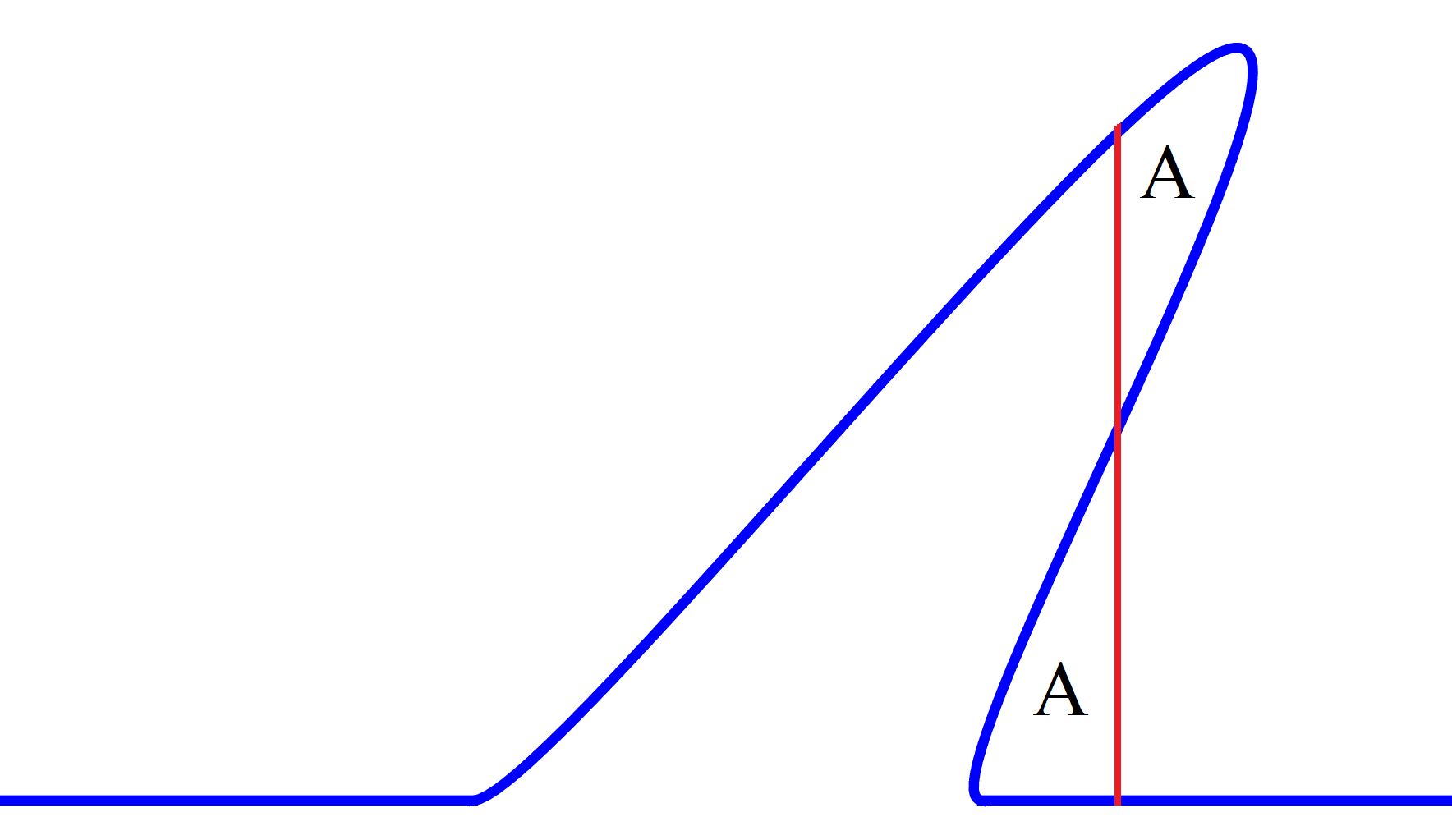}
\includegraphics[width=40mm,height=30mm]{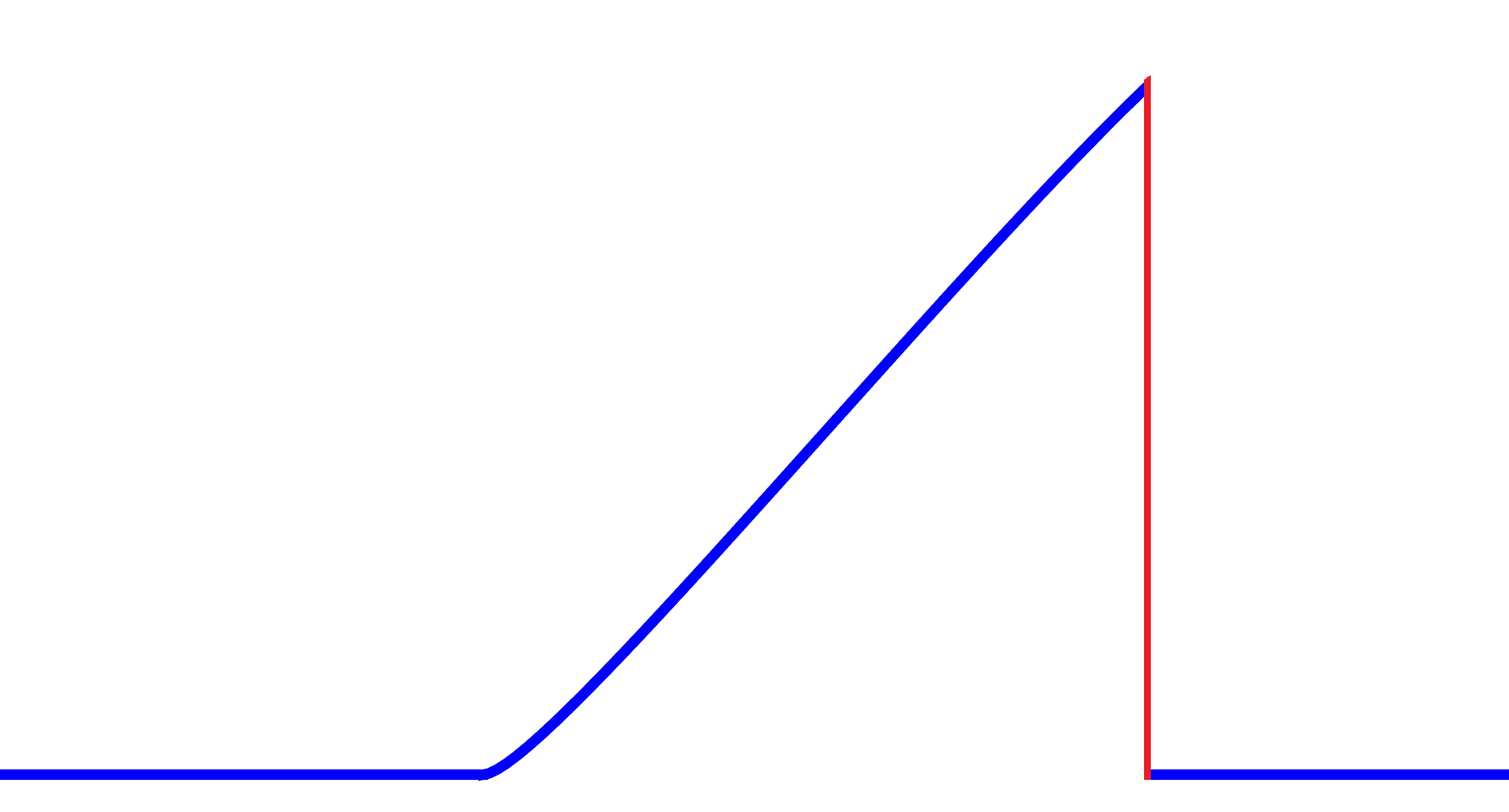}
\end{center}
\caption{ An illustration of the equal-area projection in the convex case. }
\label{Proj Sketch}
\end{figure} 

The literature on parametric interpolation is extensive, however it is rarely utilized as a tool in numerical methods for differential equations. Given that we are concerned with high accuracy and smoothness of the solution curve, as opposed to the smoothness of the parametrization itself, we seek interpolation methods with high geometric continuity. Geometric continuity was first introduced in \cite{DeBoore}, where the authors matched function value, tangent direction and curvature at endpoints to obtain up to sixth order accuracy. Numerous other interpolation techniques can be employed to achieved desired characteristics, such as matching prescribed arc length, see \cite{ArcLength,farouki1}, or minimizing the curvature variation energy in \cite{CurvMin}, or the strain energy in \cite{StrainEnergy}.  Given that we are interested in obtaining high order numerical schemes, we focus on parametric interpolation methods emphasizing accuracy.

The conservative nature of (\ref{Cauchy}) is fundamental to the resulting solution, for example, conserving area, as in the equal-area principle, is sufficient to select the appropriate weak solution in the convex flux function case. With this in mind, we are interested in applying the area-preserving parametric interpolation method discussed in \cite{mcgregor2019area}. In that work the authors construct a family of exactly area-preserving parametric Hermite polynomials which are fifth order accurate, one order higher than the standard parametric cubic Hermite. 

This paper is organized as follows: First, in Section \ref{AreaInterpolation} we present a brief overview of cubic B\'ezier interpolation and discuss the area-preserving framework of \cite{mcgregor2019area}. Next, in Section \ref{PIF}, we show how the parametric interpolation framework can be applied to homogeneous scalar conservation laws in one-space dimension. In particular we show that through the use of the Generalized Equal-Area Principle we are able to construct weak solutions which correspond to the appropriate convex envelope of $F$. Finally in Section \ref{Numerics} we show some numerical results and conclude with a discussion in Section \ref{Discussion}.

\section{Area-Preserving Parametric Interpolation}\label{AreaInterpolation}
In this section we present a brief overview of the area-preserving B\'ezier interpolation discussed in \cite{mcgregor2019area}. The main objective that work may be summarized as follows: given a planar parametric curve $\langle f(s), g(s) \rangle$, parametrized by $s\in[s_0,s_1]$, find a cubic B\'ezier polynomial defined by
\begin{equation}
\vec{B}(t)=\langle B_1(t) \, , \, B_2(t) \rangle=\vec{A}(1-t)^3+3\vec{C}_1(1-t)^2t+3\vec{C_2}(1-t)t^2+\vec{D}t^3, \quad \text{for $t\in[0,1]$},\label{Bezier}
\end{equation}
which satisfies
\begin{align*}
&\vec{B}(0)=\langle f(s_0) \, , \, g(s_0)\rangle, \quad \vec{B}(1)=\langle f(s_1) \, , \, g(s_1)\rangle,\nonumber\\
&\vec{B}'(0)=r_1\langle f'(s_0) \, , \, g'(s_0)\rangle=r_1\vec{\alpha},\quad \text{for some } r_2\in\mathbb{R}\nonumber\\
&\vec{B}'(1)=r_2\langle f'(s_1) \, , \, g'(s_1)\rangle=r_2\vec{\beta},\quad \text{for some } r_2\in\mathbb{R} , \text{and finally}\nonumber\\
&\int_0^1B_2(\tau)B_1'(\tau)\text{d}\tau=\int_{s_0}^{s_1}g(\tau)f'(\tau)\text{d}\tau. \quad \text{(Equal-area constraint)}
\end{align*}
The coefficients $\vec{A},\vec{C}_1,\vec{C}_2$ and $\vec{D}$ are extracted from the functions $f$ and $g$ above, however, an additional degree of freedom remains.  After translating the data to the origin ($\vec{A}=0$), the Equal-area constraint leads to a relation in terms of $r_1$ and $r_2$, given by
\begin{equation}
\frac{r_1r_2}{60}(\vec{\alpha}\times\vec{\beta} )+\frac{r_1}{10}(\vec{D}\times\vec{\alpha})+\frac{r_2}{10}(\vec{\beta}\times\vec{D})+\frac{D_1D_2}{2}=\int_{s_0}^{s_1}g(\tau)f'(\tau)\text{d}\tau-g(s_0)((f(s_1)-f(s_0)),\label{AreaPres}
\end{equation}
where $\times$ denotes the scalar vector product $\vec{\alpha}\times\vec{\beta}=\alpha_1\beta_2-\beta_1\alpha_2$. The main result in \cite{mcgregor2019area} shows that the interpolation is fifth order accurate in the $L^{\infty}$ norm provided the parameters $r_1$ and $r_2$ satisfy the Equal-area constraint (\ref{AreaPres}) and an appropriate decay rate ( as $||\vec{D}-\vec{A}||\rightarrow 0 $).  It is important to note that this result assumes the portion of the parametric curve being interpolated is small enough such that it may be represented by some function $\langle x,\tilde{f}(x)\rangle$ after an appropriate rotation.  The simplest fifth order area-preserving cubic B\'ezier interpolation of the function $\langle x,\tilde{f}(x)\rangle$ over the interval $x\in[0,h]$ is given by taking $r_1=h$ and solving for $r_2$ in (\ref{AreaPres}). This is a natural choice since the resulting curve can be viewed as a perturbed cubic Hermite polynomial. For more details see \cite{mcgregor2019area}.

In the forthcoming section we provide details on how to apply this parametric interpolation framework to homogeneous scalar conservation laws in one space dimension.

\section{Parametric Interpolation Framework for 1-D Homogeneous Scalar Conservation laws}\label{PIF}
The purpose of this section is to discuss the properties of the parametric curve (\ref{ParCurve}), $\langle x_0+F'(g(x_0))t, g(x_0) \rangle$, given by solving the characteristic equations (\ref{CharEq}). In particular we are interested in the relationship between area-preserving projections of (\ref{ParCurve}) and the convex envelope of the flux function $F$ in (\ref{Cauchy}). 
We begin by showing that the data required to construct a parametric polynomial interpolant, as in Section \ref{AreaInterpolation}, is easily obtained from the parametric curve (\ref{ParCurve}).

Suppose we wish to interpolate (\ref{ParCurve}) between the parametrization values of $s_0$ and $s_1$ at time $\tau$. As discussed in Section \ref{AreaInterpolation}, to utilize the area-preserving interpolation of \cite{mcgregor2019area} we require value and tangent direction data at each endpoint, along with the parametric area under the curve between $s_0$ and $s_1$. Therefore, constructing a cubic B\'ezier interpolant, $\vec{B}(t)$ for $t\in[0,1]$, uses the data given by
\begin{align*}
&\vec{B}(0)=\langle s_0+F'(g(s_0))\tau , g(s_0)\rangle, \quad \vec{B}(1)=\langle s_1+F'(g(s_1))\tau , g(s_1)\rangle,\nonumber\\
&\vec{B}'(0)=\langle 1+F''(g(s_0))g'(s_0)\tau , g'(s_0)\rangle,\\
&\vec{B}'(1)=\langle 1+F''(g(s_1))g'(s_1)\tau , g'(s_1)\rangle ,\quad  \text{and finally}\nonumber\\
&\int_0^1B_2(s)B_1'(s)\text{d}s=\int_{s_0}^{s_1}g(s)\left(1+F''(g(s))g'(s)\tau\right)\text{d}s.
\end{align*}
We first notice that the endpoint values and the tangent direction at $s_0$ and $s_1$ are easily obtained through simple evaluation of $F$ and $g$, along with their respective derivatives. We note that if the piecewise smooth initial condition, $g$, is not differentiable at an endpoint, then we must use the one sided limit from the interior of the interpolation domain to extract this data. The final piece of data which yields the parametric area under (\ref{ParCurve}) between $s_0$ and $s_1$ can be significantly simplified through a substitution and integration by parts, yielding
\begin{align}
\int_{s_0}^{s_1}g(s)&\left(1+F''(g(s))g'(s)\tau\right)\text{d}s=\int_{s_0}^{s_1}g(s)\text{d}s+\tau\left(F'(g(s))g(s)-F(g(s))\right)\biggr |_{s=s_0}^{s_1}\nonumber\\
=&\int_{s_0}^{s_1}g(s)\text{d}s+\tau\left(F'(g(s_1))g(s_1)-F'(g(s_0))g(s_0)+F(g(s_0))-F(g(s_1))\right).\label{ParArea}
\end{align}
We see from equation (\ref{ParArea}) that updating a parametric interpolant for a later time step, say $\tau+\Delta\tau$, does not require any additional integration. Instead we perform an initial integration at time $\tau=0$ and then are able to update the parametric area through simple evaluation of the second term in (\ref{ParArea}). This is advantageous since we know from \cite{mcgregor2019area} that adding the area constraint yields an additional order of accuracy, and as can be seen from this calculation, it does not create a significant increase in computational cost.

Another essential property of the parametric curve (\ref{ParCurve}) is that area change can only occur over the boundary. For example, taking $g(s_0)=g(s_1)$ and using equation (\ref{ParArea}) we have
\begin{align*}
\frac{d}{dt}\int_{s_0}^{s_1}g(s)\left(1+F''(g(s))g'(s)\tau\right)\text{d}s&=F'(g(s_1))g(s_1)-F'(g(s_0))g(s_0)+F(g(s_1))-F(g(s_0))\\
&=F'(g(s_1))g(s_1)-F'(g(s_1))g(s_1)+F(g(s_1))-F(g(s_1))\\
&=0.
\end{align*}
This is the main motivation behind the equal-area principle seen in Figure \ref{Proj Sketch}. We know that the area under weak solutions of (\ref{Cauchy}) can only change via flux over the boundary, and since the parametric curve (\ref{ParCurve}) preserves this same structure, then the weak solution must come from an area-preserving projection of the multi-valued portion of the curve. The subtly here is that the overturned region obtained from (\ref{ParCurve}) where the flux function $F$ is non-convex may be far more complicated than in convex examples. For instance, see Figure \ref{NonConRiemann}.

\begin{figure}[!ht]
\begin{center}
\includegraphics[width=40mm,height=30mm]{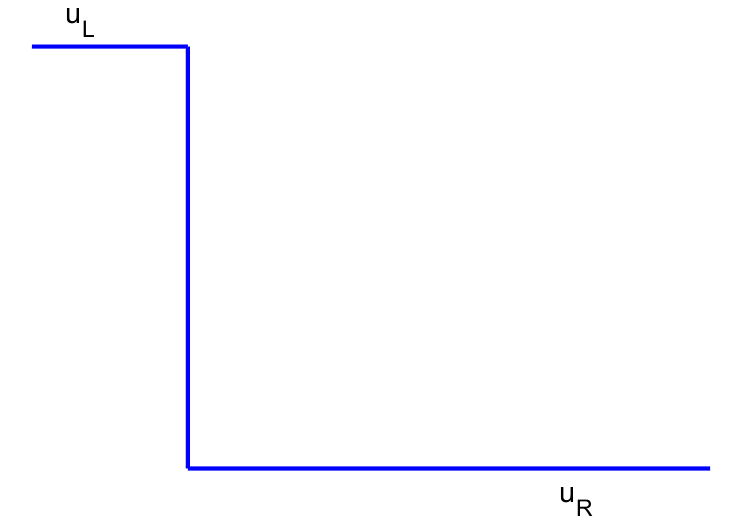}
\includegraphics[width=40mm,height=30mm]{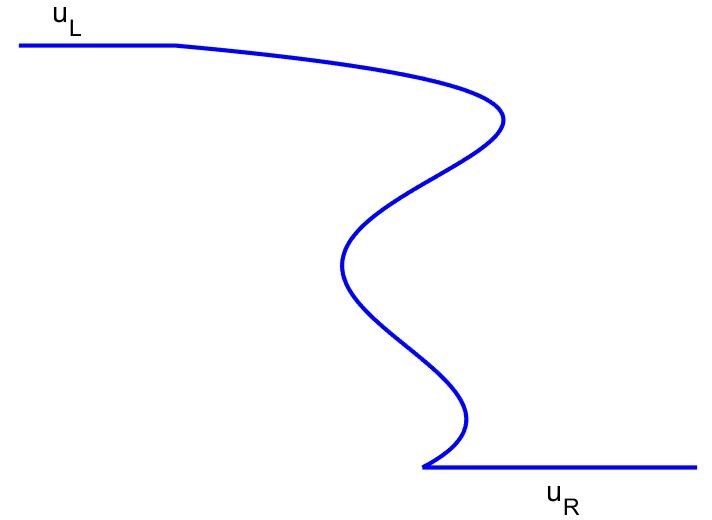}
\end{center}
\caption{Example of a Riemann problem for a non-convex flux function. }
\label{NonConRiemann}
\end{figure}

The reason the example shown in Figure \ref{NonConRiemann} creates an additional difficulty is due to the overturned region not yielding a unique equal-area solution. For example, Figure \ref{NonConRiemann2} shows two possible equal-area solutions corresponding to the flux function shown in Figure \ref{RiemannFlux1}. In both cases the area to the right of the vertical line is the same as the area to the left, therefore replacing these overturned curves with the vertical shock lines preserves area. 

\begin{figure}[!ht]
\begin{center}
\includegraphics[width=40mm,height=30mm]{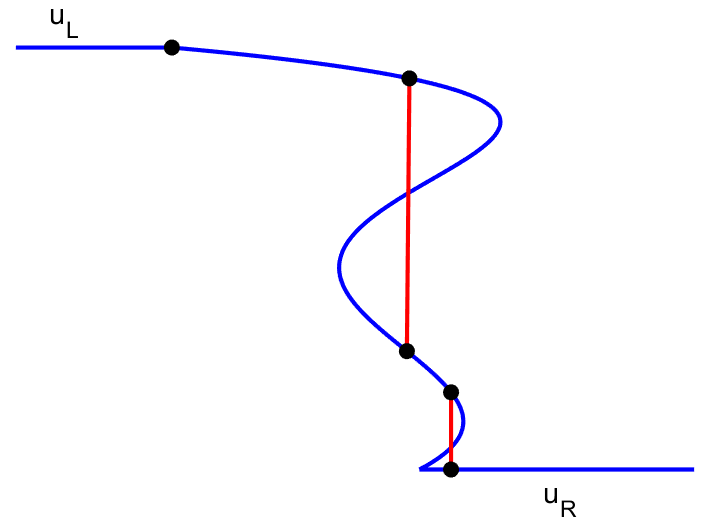}
\includegraphics[width=40mm,height=30mm]{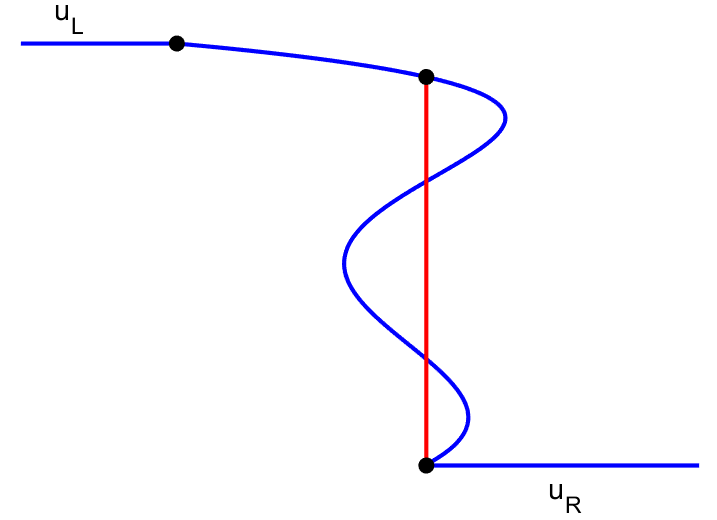}
\end{center}
\caption{Example where the equal-area principle fails to yield a unique solution. }
\label{NonConRiemann2}
\end{figure}

The remaining part of this section is dedicated to understanding equal-area solutions corresponding to the parametric curve (\ref{ParCurve}) and then show that we can always find an equal-area solution which corresponds exactly to the appropriate convex envelope of the flux function $F$ between $u_L$ and $u_R$. We begin by recalling some general properties of convex envelopes for single-variable functions.

\begin{figure}[!ht]
\begin{center}
\includegraphics[width=60mm,height=40mm]{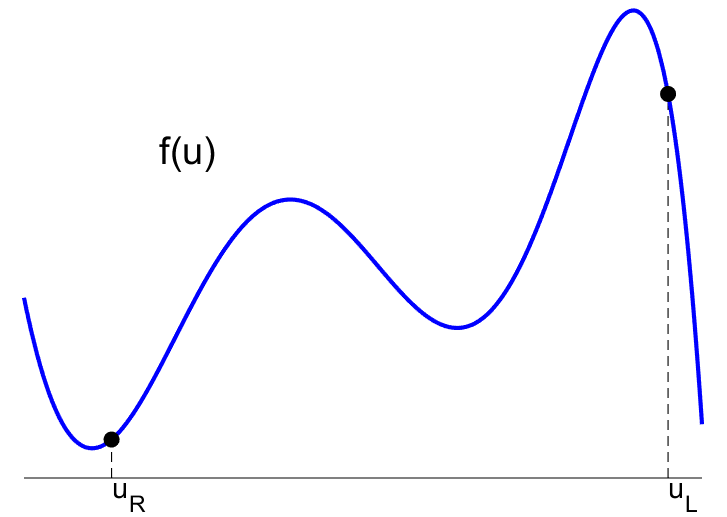}
\end{center}
\caption{Flux function associated with Riemann problem in Figure \ref{NonConRiemann}. }
\label{RiemannFlux1}
\end{figure}

The upper convex hull of a single variable function $F(x)$ between two points $x_0<x_1$ is defined by a function $UconF(x)\geq F(x)$, where $UconF(x)$ is the smallest function such that the set of all $(x,y)\in\mathbb{R}^2$ satisfying $x_0\leq x\leq x_1$ and $y\leq UconF(x)$ is a convex set. Similarly the lower convex hull is defined by the largest function $LconF(x)\leq F(x)$ such that the set of all $(x,y)\in\mathbb{R}^2$ satisfying $x_0\leq x\leq x_1$ and $y\geq LconF(x)$ is a convex set. Therefore, if we can find $UconF(x)$ we have the upper convex hull, and similarly with $LconF(x)$ and the lower convex hull. The standard way of defining $UconF$ and $LconF$, shown in Definition \ref{ConHull}, is through affine functions, for example see \cite{ConvexAnalysis}.  
\begin{definition}\label{ConHull}
The upper convex hull of a function $F(x)$ for $x\in[x_0,x_1]$ is given by the set of all $(x^*,y^*)\in\mathbb{R}^2$ satisfying $y^*\leq UconF(x^*)$, where $UconF$ is given by
\begin{align*}
UconF(x^*)=\inf_{(a,b)\in\mathbb{R}^2}ax^*+b \quad \text{such that} \quad ax+b\geq F(x)\, \forall x\in[x_0,x_1]. 
\end{align*}
Similarly, the lower convex hull of a function $F(x)$ for $x\in[x_0,x_1]$ is given by the set of all $(x^*,y^*)\in\mathbb{R}^2$ satisfying $y^*\geq LconF(x^*)$, where $LconF$ is given by
\begin{align*}
LconF(x^*)=\sup_{(a,b)\in\mathbb{R}^2}ax^*+b \quad \text{such that} \quad ax+b\leq F(x)\, \forall x\in[x_0,x_1]. 
\end{align*}
\end{definition}
Therefore, at each $x^*\in[x_0,x_1]$, the value of $UconF(x^*)$, for example, is given through evaluation of the smallest affine function which is greater than or equal to $F(x)$ everywhere within the domain of computation. Definition (\ref{ConHull}) makes it clear that $UconF$ will consist of line segments and sections where $UconF(x)=F(x)$, and similarly with $LconF(x)$. The upper convex hull corresponding to Figure \ref{RiemannFlux1} is shown in Figure \ref{UpperConHullFig}. Focusing on the upper convex hull for now, we present a few properties which help with the construction of $UconF$ arising from Definition (\ref{ConHull}).

\begin{figure}[!ht]
\begin{center}
\includegraphics[width=60mm,height=40mm]{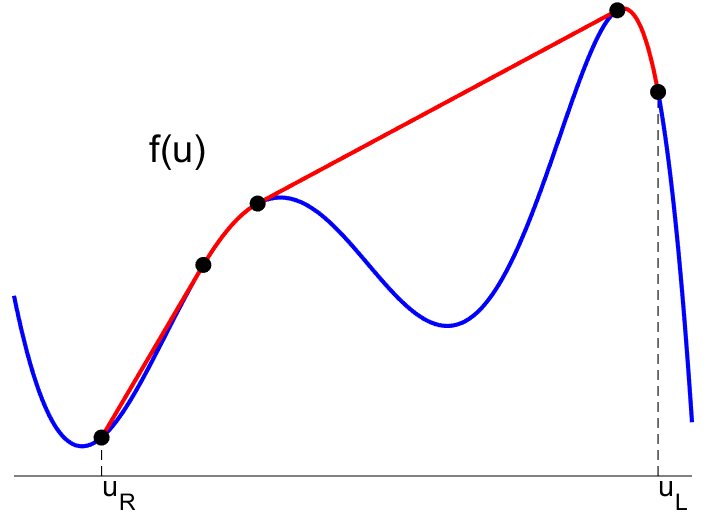}
\end{center}
\caption{Example of upper convex hull of $f(u)$.}
\label{UpperConHullFig}
\end{figure}

First and foremost, for given states $u_R<u_L$ we check if the secant line from $(u_R,F(u_R))$ to $(u_L,F(u_L))$, given by  $y(u)=\frac{F(u_L)-F(u_R)}{u_L-u_R}(u-u_R)+F(u_R)$, satisfies $y(u)\geq F(u)$ for $u\in[u_R,u_L]$. If this is the case, then the $UconF(u)=y(u)$. Therefore from here we assume that the secant $y(u)$ crosses $F(u)$ somewhere between $u_R$ and $u_L$. Supposing that the secant from $u_R$ and $u_L$ fails to be the upper convex hull, one must determine if $UconF(u)=F(u)$ in a neighbourhood of $u_R$, or if $UconF(u)$ connects to $u_R$ through a secant line. We begin by considering the set $T_{u_R}$, defined by
\begin{equation}
T_{u_R}=\left\{ u^*\in(u_R,u_L)\biggr|\,  \frac{F(u^*)-F(u_R)}{u^*-u_R}=F'(u^*)\geq F'(u_R) \right\}.\label{TangentSet}
\end{equation}

Using the above definition of $T_{u_R}$ we have the following Lemma.
\begin{lemma}\label{TangentLemma}
Suppose that the secant line from $u_R$ to $u_L$ fails to be the upper convex hull of the smooth function $F(u)$ in $[u_R,u_L]$. Then, if $T_{u_R}$ is non-empty, then $UconF(u)$ consists of a secant line from $u_R$ to $u^*$ which is tangent to $F(u)$ at $u^*\in T_{u_R}$ where $u^*$ satisfies $F'(u^*)\geq F'(u)$ for all $u\in T_{u_R}$. If the steepest secant is tangent to $F$ in more than one location, then we set $u^*$ to be the largest such point. Otherwise if $T_{u_R}$ is empty, then $UconF(u)=F(u)$ in a neighbourhood to the right of $u_R$.
\end{lemma}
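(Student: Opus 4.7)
The plan is to characterize $UconF$ in a right-neighborhood of $u_R$ by studying affine majorants of $F$ through the point $(u_R,F(u_R))$, and then to resolve the dichotomy using the structure of concave envelopes. Introduce the slope function $\phi(u)=(F(u)-F(u_R))/(u-u_R)$ on $(u_R,u_L]$, extended continuously by $\bar\phi(u_R)=F'(u_R)$, and set $s=\sup_{(u_R,u_L]}\phi=\max_{[u_R,u_L]}\bar\phi$. An affine function $L(u)=a(u-u_R)+F(u_R)$ majorizes $F$ on $[u_R,u_L]$ precisely when $a\geq s$, so Definition \ref{ConHull} yields $UconF(u_R)=F(u_R)$ and pins the right derivative at $u_R$ to $s$. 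The hypothesis that the secant from $u_R$ to $u_L$ fails to be $UconF$ is exactly $s>\phi(u_L)$, which rules out the maximum of $\bar\phi$ lying at $u_L$.

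Next, I would invoke the standard fact that a concave envelope of a continuous function is affine on each connected component of $\{UconF>F\}$. This produces a dichotomy near $u_R$: either (A) $UconF=F$ on some $[u_R,u_R+\varepsilon]$, or (B) there exists $u^{**}\in(u_R,u_L]$ with $UconF>F$ on $(u_R,u^{**})$, $UconF(u^{**})=F(u^{**})$, and $UconF(u)=F(u_R)+m(u-u_R)$ on $[u_R,u^{**}]$ for $m=\phi(u^{**})$. In case (B), I would argue $F'(u^{**})=m\geq F'(u_R)$, forcing $u^{**}\in T_{u_R}$: concavity of $UconF$ at $u^{**}$ together with $UconF=F$ immediately to the right of $u^{**}$ (or the boundary behavior when $u^{**}=u_L$) yields $F'(u^{**})\leq m$, while the fact that $F-UconF\leq 0$ with equality at $u^{**}$ on the left side gives $F'(u^{**})\geq m$; and $m\geq F'(u_R)$ because any concave majorant of $F$ dominates its right derivative at $u_R$.

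This immediately settles the empty case: if $T_{u_R}=\emptyset$, then (B) is impossible, so (A) holds. For the non-empty case, I would first establish $s=\max_{v\in T_{u_R}}F'(v)$. The inequality $F'(v)=\phi(v)\leq s$ for $v\in T_{u_R}$ is immediate. For the reverse, let $\bar u$ achieve $\max\bar\phi$; since $s>\phi(u_L)$, $\bar u\neq u_L$. If $\bar u\in(u_R,u_L)$, then $\bar\phi'(\bar u)=0$ yields $F'(\bar u)=\phi(\bar u)=s$, and the existence of any $w\in T_{u_R}$ gives $s\geq\phi(w)\geq F'(u_R)$, so $\bar u\in T_{u_R}$. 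In the degenerate boundary case $\bar u=u_R$, we have $s=F'(u_R)$, and any $w\in T_{u_R}$ satisfies $F'(u_R)\leq F'(w)=\phi(w)\leq s=F'(u_R)$, squeezing $F'(w)=s$. Let $u^*$ be the largest point of $T_{u_R}$ with $F'(u^*)=s$. Then $L^*(u)=F(u_R)+s(u-u_R)$ is an affine majorant passing through $(u^*,F(u^*))$, so $F(u^*)\leq UconF(u^*)\leq L^*(u^*)=F(u^*)$ forces $UconF(u^*)=F(u^*)$; concavity of $UconF$ on $[u_R,u^*]$ with endpoint values equal to those of the affine $L^*$ yields $UconF\geq L^*$ there, and combined with $UconF\leq L^*$ gives $UconF=L^*$ on $[u_R,u^*]$. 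Tiebreaking by the largest such $u^*$ simply maximizes the extent of the affine piece.

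The main obstacle is the clean handling of Step~5, particularly the degenerate case in which $s$ is attained only in the limit at $u_R$ yet $T_{u_R}$ is non-empty: here the argument must use the squeeze $F'(u_R)\leq F'(w)\leq s=F'(u_R)$ to produce the tangent point, rather than a critical-point argument on $\bar\phi$. A secondary subtlety is justifying $F'(u^{**})=m$ in case (B) when $u^{**}=u_L$, where the two-sided concavity balance must be replaced by one-sided concavity at the right endpoint combined with the supporting-line interpretation.
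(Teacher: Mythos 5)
Your argument is correct in substance but follows a genuinely different route from the paper's. The paper works by elimination directly from Definition~\ref{ConHull}: taking $u^*=\argmax_{u\in T_{u_R}}F'(u)$, it argues that any shallower secant from $u_R$ would fail to majorize $F$ and that $UconF=F$ near $u_R$ is impossible because the tangent at $u_R$ cannot majorize; the empty case is handled via the auxiliary function $S(c,u)=F'(c)(u-c)+F(c)$ and continuity in $c$. You instead introduce the slope function $\phi(u)=(F(u)-F(u_R))/(u-u_R)$, characterize affine majorants through $(u_R,F(u_R))$ by $a\geq s=\max\bar\phi$, and invoke the structure theorem that a concave envelope is affine on the closure of each component of $\{UconF>F\}$. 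This buys you two things the paper's proof does not make explicit: a clean identification $\max_{v\in T_{u_R}}F'(v)=s$ via the interior critical point of $\bar\phi$ (with the degenerate case $s=F'(u_R)$ handled by a squeeze), and a direct verification that $UconF$ coincides with the secant $L^*$ on $[u_R,u^*]$ by trapping it between the chord inequality for concave functions and the majorant inequality $UconF\leq L^*$ --- the paper's corresponding step (``a secant with smaller slope would eventually intersect $F$'') is asserted rather than proved. The price is reliance on the envelope structure theorem, where the paper's tangent-line argument is more elementary.

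Two small points. First, the ``secondary subtlety'' you flag --- justifying $F'(u^{**})=m$ in case (B) when $u^{**}=u_L$ --- is vacuous: if the component of $\{UconF>F\}$ abutting $u_R$ reaches $u_L$, then $UconF$ is affine on all of $[u_R,u_L]$ with the endpoint values of $F$, i.e.\ it is the secant from $u_R$ to $u_L$, contradicting the standing hypothesis, so that case never arises. Second, your dichotomy (A)/(B) tacitly assumes the contact set $\{UconF=F\}$ does not accumulate at $u_R$ through infinitely many components of $\{UconF>F\}$; the paper makes the same tacit assumption (its proof also presumes $UconF$ near $u_R$ is either a single secant or equals $F$), and without some such regularity the statement of the lemma itself would need qualification, so this is not a gap relative to the paper, but it is worth acknowledging explicitly.
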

\begin{proof}
Suppose $T_{u_R}$ is non-empty with $u^*\in T_{u_R}$, where $\displaystyle u^*=\argmax_{u\in T_{u_R}}F'(u)$. If the argmax is satisfied at more than one location then set $u^*$ to be the largest value among them. For this secant line to not be part of $UconF(u)$ we would need a smaller function satisfying the convex hull definition (\ref{ConHull}). If $UconF$ contained a secant line from $u_R$ with smaller slope than $F'(u^*)$, then the secant line would fail to satisfy the majorization property of (\ref{ConHull}), as it would eventually intersect $F(u)$ somewhere. The other option is for $UconF(u)=F(u)$ on some interval $[u_R,u_R+\varepsilon]$ for some $\varepsilon>0$. However this requires lines tangent to $F(u)$ within the interval $[u_R,u_R+\varepsilon]$  majorize $F(u)$, and since $u^*\in T_{u_R}$ we know that the secant line from $u_R$ to $u^*$ has slope greater than $F'(u_R)$, meaning that the tangent line at $u_R$ cannot possibly satisfy the majorization criteria of $UconF$ from Definition (\ref{ConHull}).

Now suppose $T_{u_R}$ is empty with the secant line from $u_R$ to $u_L$ failing to be $UconF$. This implies all secant lines from $u_R$ to $u^*$ with $\frac{F(u^*)-F(u_R)}{u^*-u_R}=F'(u^*)$ must have $F'(u^*)<F'(u_R)$. In particular, the tangent line at $u_R$ majorizes $F(u)$ for all $u\in(u_R,u_L]$, otherwise $UconF$ would contain the secant directly to $u_L$, or $T_{u_R}$ would be non-empty. Now consider the function $S(c,u)=F'(c)(u-c)+F(c)$. First we observe that $S(u_R,u)$ is exactly the tangent line from $u_R$, which implies $S(u_R,u)>F(u)$ for all $u\in(u_R,u_L]$. Since we have assumed $F$ is smooth, we have continuity in $S$ with respect to its first variable, which implies that there exists an $\varepsilon>0$ such that $S(u_R+\varepsilon,u)>F(u)$ for all $u\in(u_R,u_L]$. This implies $UconF(u)=F(u)$ in a neighbourhood of $u_R$, which is the smallest possible value for $UconF(u)$ since we require $UconF(u)\geq F(u)$ within $[u_R,u_L]$.
\end{proof}

As mentioned in Lemma \ref{TangentLemma}, when $T_{u_R}$ is empty and the secant line from $u_R$ to $u_L$ fails to be the upper convex hull, we know the tangent line at $u_R$, given by $y_{u_R}(u)=F'(u_R)(u-u_R)+F(u_R)>F(u)$ for $u\in(u_R,u_L]$. This allowed us to prove that $UconF(u)=F(u)$ in some neighbourhood $[u_R,u_R+\varepsilon)$. In the case that the conditions for Lemma \ref{TangentLemma} are satisfied with $T_{u_R}$ non-empty, we have that $UconF(u)$ has a secant line connecting $u_R$ to some $u^*\in(u_R,u_L)$. Using the same argument presented in Lemma \ref{TangentLemma}, the tangent line at $u^*$ majorizes $F(u)$ on $(u^*,u_L]$ and therefore $UconF(u)=F(u)$ in a neighbourhood $[u^*,u^*+\varepsilon)$, for some $\varepsilon>0$.  From this position we can construct the next branch of the upper convex hull.

\begin{lemma}\label{Branch2}
Suppose $\hat{u}\in[u_R,u_L)$ satisfies $UconF(\hat{u})=F(\hat{u})$ with $y_{\hat{u}}(u)=F'(\hat{u})(u-\hat{u})+F(\hat{u})>F(u)$ for $u\in(\hat{u},u_L]$. Then we have $UconF(u)=F(u)$ for $u\in[\hat{u},u^{**}]$, where $u^{**}$ is the smallest $u$ such that $y_{u^{**}}(u_L)=F'(u^{**})(u_L-u^{**})+F(u^{**})=F(u_L)$, or $y_{u^{**}}(\bar{u})=F(\bar{u})$, with $F'(u^{**})=F'(\bar{u})$, for some $\bar{u}\in(u^{**},u_L)$. 
\end{lemma}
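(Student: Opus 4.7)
The plan is to show that at every $c \in [\hat{u}, u^{**}]$, the tangent line $y_c(u) = F'(c)(u-c) + F(c)$ is a global affine majorant of $F$ on $[u_R, u_L]$. Once this is in hand, Definition \ref{ConHull} yields $UconF(c) \leq y_c(c) = F(c)$, while $UconF(c) \geq F(c)$ is automatic, so $UconF(c) = F(c)$, which is the conclusion. The hypothesis $UconF(\hat{u}) = F(\hat{u})$ together with smoothness of $F$ forces the optimal affine majorant at $\hat{u}$ to be tangent to $F$ there, so $y_{\hat{u}} \geq F$ already holds throughout $[u_R, u_L]$; this will be used in the backward step.

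First I would establish the forward majorization $y_c \geq F$ on $[c, u_L]$ via a continuity argument in the parameter $c$. Let $c^*$ be the supremum of those $c \in [\hat{u}, u_L]$ such that $y_{c'} \geq F$ on $[c', u_L]$ for every $c' \in [\hat{u}, c]$. The strict inequality in the hypothesis together with joint continuity of $y_c(u) - F(u)$ gives $c^* > \hat{u}$, and closedness of the defining set yields majorization at $c = c^*$ itself. Since $y_{c^*}(c^*) = F(c^*)$, the minimum of $y_{c^*} - F$ on $[c^*, u_L]$ is exactly $0$. Were this minimum attained only at $u = c^*$, the local expansion $y_c(u)-F(u) = -\tfrac12 F''(c)(u-c)^2 + O((u-c)^3)$ (nonnegative near $u=c$, since the already-established forward majorization up to $c^*$ forces $F''(c^*)\leq 0$) combined with uniform positivity of $y_{c^*}-F$ on $[c^*+\delta, u_L]$ would propagate the inequality slightly past $c^*$, contradicting maximality unless $c^* = u_L$. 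Hence the minimum $0$ is also attained at some $\bar{u} \in (c^*, u_L]$: if $\bar{u} = u_L$ this is condition (a) defining $u^{**}$; if $\bar{u}$ is interior then the nonnegative $y_{c^*} - F$ has an interior zero there, forcing a vanishing derivative $F'(\bar{u}) = F'(c^*)$, which is condition (b). By minimality $c^* = u^{**}$, and forward majorization holds on $[\hat{u}, u^{**}]$.

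For the backward majorization $y_c \geq F$ on $[u_R, c]$, observe that forward majorization on $[\hat{u}, u^{**}]$ forces $F$ concave there, hence $F'(c) \leq F'(\hat{u})$. The identity
\[ y_c(u) - y_{\hat{u}}(u) = [y_c(\hat{u}) - F(\hat{u})] + (F'(c) - F'(\hat{u}))(u - \hat{u}) \]
then shows $y_c \geq y_{\hat{u}}$ on $[u_R, \hat{u}]$: the bracket is nonnegative because the tangent at $c$ lies above the concave graph of $F$ evaluated at $u = \hat{u}$, and for $u \leq \hat{u}$ the second term is a product of two non-positive factors. Chaining $y_c \geq y_{\hat{u}} \geq F$ completes the proof. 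The main obstacle is the continuity argument in the previous paragraph — specifically, pinning down that the first failure of forward majorization as $c$ grows matches exactly one of the two conditions defining $u^{**}$. This reduces to the observation that any first contact of the tangent $y_c$ with $F$ at an interior point must be a tangential touch rather than a transversal crossing, since transversal crossings would have appeared at strictly smaller $c$.
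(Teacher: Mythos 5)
Your proposal is correct and reaches the conclusion by a genuinely different route from the paper's. The paper argues by contradiction that $F''\le 0$ on $[\hat u,u^{**}]$: if $F''(v)>0$ at some interior $v$, the tangent at $v$ dips below $F$ at $v+\varepsilon$, and the intermediate value theorem applied to $S(c,u)=F'(c)(u-c)+F(c)$ in its first variable produces an earlier footpoint whose tangent line re-contacts $F$, contradicting the minimality of $u^{**}$; it then asserts that concavity yields $UconF=F$. You instead prove the stronger statement that every tangent line $y_c$ with $c\in[\hat u,u^{**}]$ majorizes $F$ on all of $[u_R,u_L]$, so that $UconF(c)=F(c)$ follows directly from Definition \ref{ConHull}. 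Both arguments hinge on the same two ingredients --- continuity of the tangent family in the footpoint, and the observation that the first re-contact must be tangential rather than transversal --- but your supremum/continuation argument replaces the paper's pointwise contradiction, and your backward-majorization step (chaining $y_c\ge y_{\hat u}\ge F$ on $[u_R,\hat u]$) supplies something the paper's proof silently omits: Definition \ref{ConHull} demands an affine majorant over the whole interval $[u_R,u_L]$, not merely concavity of $F$ on $[\hat u,u^{**}]$. The one soft spot is your propagation past $c^*$: on the intermediate region $u\in(c,c^*+\delta)$ neither the local expansion nor the uniform positivity on $[c^*+\delta,u_L]$ applies when $F''(c^*)=0$, so strictly you need that $F''\le 0$ persists in a right-neighbourhood of $c^*$; this is argued at about the same level of informality as the paper's own claim that the first contact is tangential, and both are fine under the implicit standing assumption that $F''$ has isolated zeros.
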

\begin{proof}
Suppose $UconF(\hat{u})=F(\hat{u})$ and the tangent line at $\hat{u}$, given by 
$y_{\hat{u}}(u)=F'(\hat{u})(u-\hat{u})+F(\hat{u})$ satisfies $y_{\hat{u}}(u)>F(u)$ for $u\in(\hat{u},u_L]$. Additionally we suppose that $u^{**}>\hat{u}$ is the smallest value such that $y_{u^{**}}(u_L)=F(u_L)$ or $y_{u^{**}}(\bar{u})=F(\bar{u})$ with $F'(u^{**})=F'(\bar{u})$ for some $\bar{u}\in(u^{**},u_L)$. We show that this assumption results in $F''(u)\leq0$ for $u\in [\hat{u},u^{**}]$, which implies $F(u)$ lives below its tangent lines between $\hat{u}$ and $u^{**}$ and therefore $UconF(u)=F(u)$ in $[\hat{u},u^{**}]$.

Suppose that $F''(v)>0$ at some point $v\in(\hat{u},u^{**})$. By continuity of $F''(u)$, there exists some $\varepsilon>0$ such that $F''(u)>0$ for all $u\in (v-\varepsilon,v+\varepsilon)$. This implies that the tangent line at $v$, given by $y_{v}(u)=F'(v)(u-v)+F(v)$, satisfies $y_{v}(v+\varepsilon)<F(v+\varepsilon)$, since $F$ is concave up within $(v-\varepsilon,v+\varepsilon)$. Recalling the function $S(c,u)=F'(c)(u-c)+F(c)$ used in the proof of Lemma \ref{TangentLemma}, we have that $S(v+\varepsilon,u)=y_{v}(v+\varepsilon).$ By assumption that the tangent line at $\hat{u}$ is greater than $F(u)$ we have $S(\hat{u},v+\varepsilon)>F(u+\varepsilon)$. Since $S(u,v+\varepsilon)$ is continuous in its first variable, and $S(\hat{u},v+\varepsilon)>F(u+\varepsilon)$ and $S(v,v+\varepsilon)<F(u+\varepsilon)$, by the intermediate value theorem we have that there exists some $v^*\in(\hat{u},v+\varepsilon)$ such that $S(v^*,v+\varepsilon)=F(v+\varepsilon)$, which contradicts the assumption that $u^{**}$ is the earliest point satisfying $y_{u^{**}}(\bar{u})=F(\bar{u})$ for some $\bar{u}\in(u^{**},u_L)$. The property that $F'(u^{**})=F'(\bar{u})$ is simply a result of all points to the left of $u^{**}$ having tangent lines which are larger than $F$ everywhere.  
\end{proof}

The process described in Lemma \ref{Branch2} is repeated until $u_L$ is reached and function $UconF(u)$ is constructed.

The combination of Lemma \ref{TangentLemma} and Lemma \ref{Branch2} give us a way to construct the upper convex hull of $F$ directly. The following Lemmas show that utilizing the equal-area principle in a particular way is equivalent to the construction described in the previous Lemmas. But first we require the following definition.

\begin{figure}[!ht]
\begin{center}
\includegraphics[width=60mm,height=40mm]{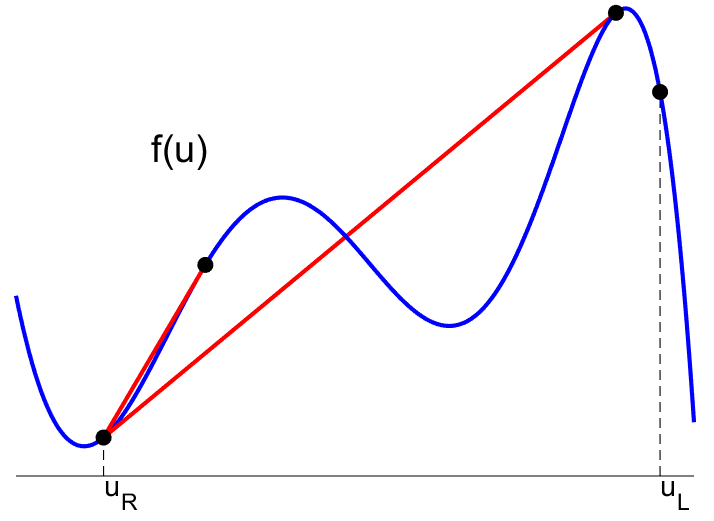}
\end{center}
\caption{Elements of $T_{u_R}$ for the flux function given in Figure \ref{RiemannFlux1}. }
\label{RiemannTangents1}
\end{figure}

\begin{definition}\label{EqualAreaCurve}
We say $\langle x(s), u(s) \rangle$ is an equal-area curve from $s_0$ to $s_1$ provided $x(s_0)=x(s_1)$ and
\begin{equation*}
\int_{s_0}^{s_1}u(s)x'(s)\text{d}s=0.
\end{equation*}
\end{definition}

The following Lemma states that secant lines from $u_R$ which are tangent to $F$ and some $u^*\in(u_R,u_L]$ are equivalent to the existence of an equal-area curve from $u_R$ to $u^*$. 

\begin{lemma}\label{EqualAreaTangent}
There exists a line segments through the point $(u_R,F(u_R))$ which is tangent to the point $(u^*,F(u^*))$ with $u_R<u^*<u_L$ and $F'(u_R)\leq F'(u^*)$ if and only if the parametric curve (\ref{ParCurve}) corresponding to the Riemann problem initially at $x=x_0$ with upper state $u_L$ and lower state $u_R$ has an equal-area line located at position $x=x_0+F'(u^*)\tau$ at time $t=\tau$ connecting $u^*$ to $u_R$. 
\end{lemma}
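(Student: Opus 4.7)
The plan is to reduce the equal-area condition to a single algebraic identity---the secant-equals-tangent relation---via a direct evaluation of $\oint u\,dx$ over the natural closed loop, and then to reconcile the remaining inequality $F'(u_R)\leq F'(u^*)$ with the geometry of that loop.

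First I would fix notation. At time $t=\tau$, the fan portion of (\ref{ParCurve}) emanating from the Riemann jump at $x_0$ is parametrized by $v\in[u_R,u_L]$ as $(x(v),v)=(x_0+F'(v)\tau,\,v)$; write $x_R:=x_0+F'(u_R)\tau$ and $x^*:=x_0+F'(u^*)\tau$. The equal-area line in the statement is the vertical segment at $x=x^*$ joining $u_R$ to $u^*$. I would then form the closed loop that traces the fan from $v=u_R$ up to $v=u^*$, drops along this vertical segment from $(x^*,u^*)$ to $(x^*,u_R)$, and returns horizontally at height $u=u_R$ back to $(x_R,u_R)$. Since the endpoints share the $x$-coordinate $x_R$, this loop fits the premise of Definition \ref{EqualAreaCurve}, and the equal-area condition reduces to $\oint u\,dx=0$.

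Next I would evaluate the three contributions. The vertical segment contributes $0$ because $dx=0$; the horizontal segment at height $u_R$ contributes $u_R(x_R-x^*)=u_R\tau\bigl(F'(u_R)-F'(u^*)\bigr)$; and the fan portion, using $dx=F''(v)\tau\,dv$ together with the same integration by parts already performed in (\ref{ParArea}), contributes $\tau\bigl[u^*F'(u^*)-F(u^*)-u_RF'(u_R)+F(u_R)\bigr]$. Summing these, cancelling the $u_RF'(u_R)$ terms, and dividing by $\tau$, the condition $\oint u\,dx=0$ collapses to
\[
(u^*-u_R)\,F'(u^*)=F(u^*)-F(u_R),
\]
which says exactly that the line through $(u_R,F(u_R))$ and $(u^*,F(u^*))$ has slope $F'(u^*)$, i.e.\ is tangent to the graph of $F$ at $u^*$. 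Because every step of this reduction is reversible, the same computation delivers both directions of the iff simultaneously.

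Finally I would address the inequality $F'(u_R)\leq F'(u^*)$. Geometrically this is equivalent to $x_R\leq x^*$, which places the equal-area line at or to the right of the characteristic emanating from $u_R$; it is precisely the configuration in which the fan actually overturns above $u_R$ so that the loop encloses the overturned region on the upper convex hull side rather than a spurious region on the opposite side, and it matches the sign convention used to define $T_{u_R}$ in Lemma \ref{TangentLemma}. The main obstacle I anticipate is not the algebra---which is just (\ref{ParArea}) rearranged---but a careful justification of this equivalence between the analytic inequality and the orientation of the loop, so that the reduction to $\oint u\,dx=0$ truly captures the intended equal-area projection and not a loop traversed the wrong way. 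Once that geometric correspondence is pinned down, the rest of the proof is a direct algebraic verification.
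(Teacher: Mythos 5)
Your proof is correct and takes essentially the same approach as the paper's: both reduce $\oint u\,dx=0$ over the closed loop (fan, vertical line, horizontal return at height $u_R$) to the tangency identity $(u^*-u_R)F'(u^*)=F(u^*)-F(u_R)$ via the same integration by parts used in (\ref{ParArea}), run the computation reversibly for both directions, and identify the inequality $F'(u_R)\leq F'(u^*)$ with the requirement that the loop actually closes. The only cosmetic difference is that you parametrize the fan directly by $v$ rather than by $s\in[0,1]$ as the paper does.
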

\begin{proof}
Suppose there is a line segment passing through the point $(u_R,F(u_R))$ which is tangent to the point $(u^*,F(u^*))$ which satisfies $F'(u_R)\leq F'(u^*)$. This implies that the tangent line $y_{u_R}(u)$ satisfies
\begin{align}
y_{u_R}(u)&=F'(u^*)(u^*-u_R)+F(u_R),\quad \text{with}\label{TangentEqn1}\\
y_{u_R}(u^*)&=F(u^*)\label{TangentEqn2}.
\end{align}
Combining these two equations yields
\begin{equation}
F'(u^*)(u^*-u_R)+F(u_R)=F(u^*).\label{TangentCon}
\end{equation}
We now turn our attention to the Riemann problem. Parametrizing the vertical line of the initial condition between $u^*$ and $u_R$ we have 
$\langle x_0,u_Rs+(1-s)u^*\rangle$ for $s\in[0,1]$. Applying the characteristic flow (\ref{CharEq}) for $\tau$ units of time yields the parametric curve $\langle x_0+F'(u_Rs+(1-s)u^*)\tau,u_Rs+(1-s)u^*\rangle$, for $s\in[0,1]$. For this curve to be equal-area about a vertical line at position $x=x_0+F'(u^*)\tau$ between $u^*$ and $u_R$ we first require $F'(u_R)\leq F'(u^*)$ to ensure we have a closed curve, which is true by assumption. We also must have that
\begin{equation}
\int_0^1(u_Rs+(1-s)u^*)(F''(u_Rs+(1-s)u^*)(u^*-u_R)\tau\text{d}s+(F'(u^*)-F'(u_R))u_R\tau=0. \label{EqualAreaLine}
\end{equation}
The integral term in (\ref{EqualAreaLine}) is similar to the one in Definition \ref{EqualAreaCurve} but extended to a piecewise defined curve.

Performing the substitution $v(s)=u_Rs+(1-s)u^*$, then integrating by parts, the integral term becomes
 \begin{align}
\int_0^1(u_Rs+(1-s)u^*)(F''(u_Rs+(1-s)u^*)(u^*-u_R)\tau\text{d}s=\tau(F'(v)v-F(v))\biggr|_{v=u^*}^{u_R}\nonumber\\
=(F'(u_R)u_R-F(u_R))\tau-(F'(u^*)u^*-F(u^*))\tau.\label{EqualAreaLine2}
\end{align}
Applying the tangency condition (\ref{TangentCon}) sets $F'(u^*)(u^*-u_R)+F(u_R)=F(u^*)$ and results in
\begin{equation}
\int_0^1(u_Rs+(1-s)u^*)(F''(u_Rs+(1-s)u^*)(u^*-u_R)\tau\text{d}s=(F'(u_R)-F'(u^*))u_R\tau, \label{EqualAreaLine3}
\end{equation}
which implies that equation (\ref{EqualAreaLine}) is satisfied and therefore $\langle x_0+F'(u_Rs+(1-s)u^*)\tau,u_Rs+(1-s)u^*\rangle$ is indeed an equal-area curve about the vertical line at position $x=x_0+F'(u^*)\tau$. 

Conversely, suppose the solution curve (\ref{ParCurve}) applied to the Riemann problem from $u_L$ to $u_R$ initially at $x_0$ has an equal-area curve at time $\tau$ about $x_0+F'(u^*)\tau$ from $u^*$ to $u_R$. The vertical shock line between $u*$ and $u_R$ maps after $\tau$ units of time to the parametric curve $\langle x_0 + F'(u_Rs+(1-s)u^*)\tau, u_Rs+(1-s)u^* \rangle$. Similarly the bottom shock state, initially parametrized by $\langle x_0+s,u_R\rangle$, for $s\geq 0$, after $\tau $ units of time maps to the curve  $\langle x_0+s+F'(u_R)\tau,u_R\rangle$. To ensure the equal-area curve about $x_0+F'(u^*)\tau$ is indeed a closed curve, we require $x_0+s+F'(u_R)\tau=x_0+F'(u^*)\tau$ for some $s\geq 0$.  Solving for $s$ we obtain $s=F'(u^*)-F'(u_R)$. Therefore, our assumption that a closed curve exists implies $F'(u_R)\leq F'(u^*)$. Our equal area assumption implies that
\begin{equation*}
\int_0^1(u_Rs+(1-s)u^*)(F''(u_Rs+(1-s)u^*)(u^*-u_R)\tau\text{d}s+(F'(u^*)-F'(u_R))u_R\tau=0
\end{equation*}
Computing the integral as done above leads to the equation
\begin{equation*}
\left((F'(u_R)u_R-F(u_R))-(F'(u^*)u^*-F(u^*))\right)\tau+(F'(u^*)-F'(u_R))u_R\tau=0.
\end{equation*}
Simplifying leads us to
\begin{equation*}
F'(u^*)(u^*-u_R)+F(u_R)=F(u^*),
\end{equation*}
which is exactly the tangency condition. Combining this with the result that $F'(u_R)\leq F'(u^*)$ completes the proof.
\end{proof}
The same is true for secant lines from $u_L$ which are tangent to some $u^{**}$ with $F'(u^{**})\leq F'(u_L)$. We present the result in the following Corollary but we omit the proof as it mirrors that of Lemma \ref{EqualAreaTangent} exactly.
\begin{cor}\label{Corollary}
There exists a line segments through the point $(u_L,F(u_L))$ which is tangent to the point $(u^{**},F(u^{**}))$ with $u_R<u^{**}<u_L$ and $F'(u^{**})\leq F'(u_L)$ if and only if the parametric curve (\ref{ParCurve}) corresponding to the Riemann problem initially at $x=x_0$ with upper state $u_L$ and lower state $u_R$ has an equal-area line located at position $x=x_0+F'(u^{**})\tau$ at time $t=\tau$ connecting $u^{**}$ to $u_L$.
\end{cor}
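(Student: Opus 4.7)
The plan is to mirror the proof of Lemma \ref{EqualAreaTangent} almost verbatim, exploiting the symmetric role played by $u_R$ and $u_L$ in the Riemann problem. The substitutions to carry out are: swap the roles of $u_R$ and $u_L$, replace $u^*$ by $u^{**}$, and reverse the slope inequality to $F'(u^{**})\leq F'(u_L)$, which corresponds to the tangent line from $u_L$ being shallower than the tangent at $u^{**}$. The tangency condition to be extracted in both directions is
\begin{equation*}
F'(u^{**})(u_L-u^{**})+F(u^{**})=F(u_L),
\end{equation*}
which plays the role of equation (\ref{TangentCon}) in Lemma \ref{EqualAreaTangent}.

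For the forward direction, I would first parametrize the portion of the vertical initial discontinuity between $u^{**}$ and $u_L$ by $\langle x_0,\, u_L s+(1-s)u^{**}\rangle$ for $s\in[0,1]$, then evolve it under (\ref{CharEq}) for time $\tau$ to obtain $\langle x_0+F'(u_L s+(1-s)u^{**})\tau,\, u_L s+(1-s)u^{**}\rangle$. The upper constant state, initially parametrized to the left of $x_0$ as $\langle x_0-s, u_L\rangle$ for $s\geq 0$, evolves to $\langle x_0-s+F'(u_L)\tau, u_L\rangle$. Closure of the candidate equal-area curve at position $x=x_0+F'(u^{**})\tau$ forces $s=(F'(u_L)-F'(u^{**}))\tau\geq 0$, which is exactly the assumption $F'(u^{**})\leq F'(u_L)$. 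The equal-area integral then splits into the same two contributions as in (\ref{EqualAreaLine}), with the horizontal boundary term now $(F'(u_L)-F'(u^{**}))u_L\tau$ instead of $(F'(u^*)-F'(u_R))u_R\tau$. Applying the substitution $v(s)=u_L s+(1-s)u^{**}$ and integrating by parts, exactly as in (\ref{EqualAreaLine2}), evaluates the curved portion to $(F'(v)v-F(v))\tau$ evaluated between the endpoints, and the tangency condition collapses the total to zero.

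For the converse, I would start from the equal-area hypothesis and the closure requirement; closure again yields $F'(u^{**})\leq F'(u_L)$ directly, and expanding the equal-area integral and simplifying produces $F'(u^{**})(u_L-u^{**})+F(u^{**})=F(u_L)$, which is the tangency condition at $u^{**}$ for a line passing through $(u_L,F(u_L))$. The main obstacle, which is mostly bookkeeping rather than substantive, is keeping signs and orientations straight: the horizontal boundary segment now extends to the \emph{left} of $x_0$ rather than to the right, and the direction of traversal along the vertical portion must be chosen consistently with Definition \ref{EqualAreaCurve} so that the closed curve integral genuinely vanishes. Because none of the algebraic manipulations of Lemma \ref{EqualAreaTangent} depend on which state is the larger one, once the orientation is fixed the argument is identical, which is why omitting the proof in the paper is justified.
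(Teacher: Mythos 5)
Your proposal is correct and is exactly what the paper intends: the paper omits the proof of Corollary~\ref{Corollary} precisely because it mirrors Lemma~\ref{EqualAreaTangent} with the roles of $u_R$ and $u_L$ exchanged, the tangency condition $F'(u^{**})(u_L-u^{**})+F(u^{**})=F(u_L)$ replacing (\ref{TangentCon}), and the closure/boundary term taken at height $u_L$ on the left. Your bookkeeping of the closure condition $s=(F'(u_L)-F'(u^{**}))\tau\geq 0$ and the cancellation via integration by parts matches the paper's argument step for step.
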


Lemma \ref{EqualAreaTangent} tells us that, for a given Riemann problem from $u_L$ to $u_R$, searching the parametric curve (\ref{ParCurve}) for equal-area curves which attach to the state $u_R$ is equivalent to searching for elements $u^*\in T_{u_R}$. As discussed in Lemma \ref{TangentLemma}, provided $T_{u_R}$ is non-empty and the secant from $u_R$ to $u_L$ does not yield $UconF$, the upper convex hull is determined by selecting the element $u^* \in T_{u_R}$ with maximum slope $F'(u^*)$. Therefore, in this situation, the equal-area curve from some $u^*\in (u_R,u_L)$ to $u_R$ which corresponds to the upper convex hull maximizes $F'(u^*)$. Since the shock location at time $\tau$ is given by $x_0+F'(u^*)\tau$, it is equivalent to seeking the equal-area curve from $u^*$ to $u_R$ with the largest shock location. 
The next situation we examine is when $UconF(u)$ consists of a secant line from $u_R$ to $u_L$. This implies that $F'(u_R)\leq \frac{F(u_L)-F(u_R)}{u_L-u_R}\leq F'(u_L)$ and $\frac{F(u_L)-F(u_R)}{u_L-u_R}\geq F'(u^*)$ for all $u^*\in T_{u_R}$. The following Lemma states that $UconF(u)$ consists of a secant line from $u_R$ to $u_L$ if and only if there is an equal-area curve from $u_L$ to $u_R$ corresponding to a shock position which exceeds all other shocks which connect to $u_R$.

\begin{lemma}\label{LemmaBigShock}
Assume a Riemann problem at $x=x_0$ with left state $u_L$ and right state $u_R$ satisfying $u_R<u_L$. Then, $UconF(u)$ consists of a secant line from $u_R$ to $u_L$ if and only if there is an equal-area curve from $u_L$ to $u_R$ corresponding to a shock position of $x_0+\frac{F(u_L)-F(u_R)}{u_L-u_R}\tau$ at time $\tau$ which exceeds all other shocks which connect to $u_R$.
\end{lemma}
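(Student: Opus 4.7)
The plan is to prove both directions using the framework of Lemma \ref{EqualAreaTangent} together with the structural characterization of $UconF$ given by Lemma \ref{TangentLemma}. I write $s = \frac{F(u_L)-F(u_R)}{u_L - u_R}$ for the Rankine-Hugoniot slope and $y(u) = F(u_R) + s(u - u_R)$ for the straight secant. The equal-area computation follows the same substitution and integration-by-parts pattern as in Lemma \ref{EqualAreaTangent}, applied to the parametrization $\langle x_0, u_R t + (1-t)u_L \rangle$ of the initial vertical jump propagated under the characteristic flow (\ref{CharEq}) for time $\tau$.

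For the forward direction, assume $UconF = y$. Since the affine function $y$ majorizes $F$ on $[u_R, u_L]$, comparing slopes at the endpoints gives $F'(u_R) \le s \le F'(u_L)$, which ensures that the characteristic image of the initial jump closes as a loop about the vertical line $x = x_0 + s\tau$. After the substitution $v = u_R t + (1-t)u_L$ and integration by parts, the equal-area equation collapses exactly to $s(u_L - u_R) + F(u_R) = F(u_L)$, which is automatic by the definition of $s$. For any competing shock connecting to $u_R$, Lemma \ref{EqualAreaTangent} identifies a corresponding $u^* \in T_{u_R}$ producing a shock at position $x_0 + F'(u^*)\tau$; the majorization inequality $y(u^*) \ge F(u^*)$ together with the tangency identity $F'(u^*)(u^*-u_R) = F(u^*) - F(u_R)$ yields $s \ge F'(u^*)$, so $x_0 + s\tau$ is the rightmost such shock.

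For the converse, assume the stated equal-area curve exists with a shock position strictly exceeding any other shock connecting to $u_R$, and suppose for contradiction that $UconF \ne y$. Closure of the equal-area loop forces $F'(u_R) \le s \le F'(u_L)$. By Lemma \ref{TangentLemma}, the first piece of $UconF$ is either a secant from $u_R$ tangent at some $u^* \in T_{u_R}$ or, when $T_{u_R}$ is empty, $UconF = F$ in a neighbourhood of $u_R$. In the first case, concavity of $UconF$ means its slopes are non-increasing while their average on $[u_R, u_L]$ equals $s$; hence the leading slope $F'(u^*)$ must be strictly greater than $s$, for otherwise every subsequent slope would be at most $s$ and the averaging condition would force $UconF \equiv y$. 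Then Lemma \ref{EqualAreaTangent} produces a shock at $x_0 + F'(u^*)\tau > x_0 + s\tau$ connecting to $u_R$, contradicting the maximality hypothesis. In the second case, the tangent line at $u_R$ majorizes $F$ on $[u_R, u_L]$, so evaluating at $u_L$ gives $F'(u_R) \ge s$; combined with closure this forces $F'(u_R) = s$, and the same slope-monotonicity argument forces $UconF = y$, again a contradiction.

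The main obstacle is the backward direction, specifically the degenerate configuration in which $T_{u_R}$ is empty, since then the hypothesis ``the shock exceeds all other shocks connecting to $u_R$'' is vacuously true and cannot be invoked directly against a competing tangent shock. The clean way to rule this case out is the slope-monotonicity argument based on concavity of $UconF$ and its fixed average slope $s$ on $[u_R, u_L]$, but this step crucially requires the closure inequality $F'(u_R) \le s$ extracted from the mere existence of the equal-area loop.
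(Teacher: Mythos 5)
Your forward direction matches the paper's: verify the equal-area identity with $S^*(\tau)=\frac{F(u_L)-F(u_R)}{u_L-u_R}\,\tau$, check closure of the loop via $F'(u_R)\leq s\leq F'(u_L)$ where $s=\frac{F(u_L)-F(u_R)}{u_L-u_R}$, and bound every competing tangent shock by showing $s\geq F'(u^*)$ for all $u^*\in T_{u_R}$. The converse is where you genuinely diverge, and your argument is correct. The paper works directly on $F$: assuming some $\hat{u}$ with $F(\hat{u})$ above the secant, it introduces $G(u)=F(u)-y(u)$ and $D_G(u)=G'(u)(u-u_R)$ and uses the Lagrange remainder together with the intermediate value theorem (plus some edge-case analysis at $w=u_R$) to manufacture a point $v^*\in T_{u_R}$ with $F'(v^*)>s$, whose equal-area curve then outruns the Rankine--Hugoniot shock. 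You instead invoke Lemma \ref{TangentLemma} to identify the first piece of $UconF$ and close both of its cases with a concavity argument: $UconF$ is concave with non-increasing derivative and average slope exactly $s$ on $[u_R,u_L]$, so unless $UconF\equiv y$ its leading slope strictly exceeds $s$, which via Lemma \ref{EqualAreaTangent} yields a faster shock connecting to $u_R$; the empty-$T_{u_R}$ case dies by pitting the closure inequality $F'(u_R)\leq s$ against the majorization of $F$ by the tangent line at $u_R$. Your route is shorter and sidesteps the delicate IVT construction, at the cost of importing two facts the paper uses only implicitly (concavity of $UconF$ and that $UconF$ agrees with $F$ at the endpoints $u_R$ and $u_L$); the paper's version is more self-contained, producing the required element of $T_{u_R}$ from scratch without reference to the structure of $UconF$. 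Both proofs ultimately reduce to exhibiting a member of $T_{u_R}$ whose secant is steeper than the secant from $u_R$ to $u_L$.
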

\begin{proof}
Consider the Riemann problem at $x=x_0$ from $u_L$ to $u_R$ with $u_L>u_R$. Suppose that $UconF(u)$ consists of a secant line from $u_R$ to $u_L$. This implies that $F'(u_R)\leq \frac{F(u_L)-F(u_R)}{u_L-u_R}\leq F'(u_L)$ and $\frac{F(u_L)-F(u_R)}{u_L-u_R}\geq F'(u^*)$ for all $u^*\in T_{u_R}$. 

To have an equal-area curve from $u_L$ to $u_R$ about a shock at position $x^*(\tau)=x_0+S^*(\tau)$ we require
\begin{equation}
\left(F'(u_L)\tau-S^*(\tau)\right)u_L+\left(S^*(\tau)-F'(u_R)\tau\right)u_R+\tau\int_{u_L}^{u_R}vF''(v)\text{d}v=0,\label{BigEqualAreaShock}
\end{equation}
along with $F'(u_R)\tau\leq S^*(\tau)\leq F'(u_L)\tau$. Simplifying (\ref{BigEqualAreaShock}) tells us that if $S^*(\tau)=\frac{F(u_L)-F(u_R)}{u_L-u_R}\tau$ then indeed the equation is satisfied. To satisfy Definition \ref{EqualAreaCurve} we require that the vertical line at $x^*(\tau)$ crosses both the left and right states of the shock, given by  $\langle x_0-r+F'(u_L)\tau,u_L\rangle$ for $r\geq 0$ and $\langle x_0+s+F'(u_R)\tau,u_R\rangle$ for $s\geq 0$ respectively . This requires the existence an $r\geq 0$ such that $x_0-r+F'(u_L)\tau=x_0+\frac{F(u_L)-F(u_R)}{u_L-u_R}\tau$, which yields $r=\left(F'(u_L)-\frac{F(u_L)-F(u_R)}{u_L-u_R}\right)\tau$, which is indeed positive by assumption. Similarly with the right state we require the existence of an $s\geq 0$ satisfying $x_0+s+F'(u_R)\tau=x_0+\frac{F(u_L)-F(u_R)}{u_L-u_R}\tau$, implying $s=\left(\frac{F(u_L)-F(u_R)}{u_L-u_R}-F'(u_R)\right)\tau$, also positive by assumption. Combining these results implies the existence of an equal-area curve from $u_L$ to $u_R$. The final requirement for the proof is that the resulting shock at the location $x^*(\tau)=x_0+\frac{F(u_L)-F(u_R)}{u_L-u_R}\tau$ exceeds all other shocks generated by equal-area curves to $u_R$. Since $UconF(u)$ consists of a secant line from $u_R$ to $u_L$, this implies that $\frac{F(u_L)-F(u_R)}{u_L-u_R}\geq F'(u^*)$ for all $u^*\in T_{u_R}$ and therefore $x_0+\frac{F(u_L)-F(u_R)}{u_L-u_R}\tau \geq x_0+F'(u^*)\tau$ for all $u^*\in T_{u_R}$ as well. By Lemma \ref{EqualAreaTangent} we know that each $u^*\in T_{u_R}$ corresponds to an equal-area curve about $x_0+F'(u^*)\tau$, which by the above argument must satisfy $x_0+F'(u^*)\tau \leq x_0 + \frac{F(u_L)-F(u_R)}{u_L-u_R}\tau$. Therefore the equal-area curve corresponding to the maximum shock position is given by the equal-area curve from $u_L$ to $u_R$.
\\
\\
Conversely, Suppose there is an equal-area curve from $u_L$ to $u_R$ which corresponds to a shock with position greater than or equal to any other equal-area curve to $u_R$. This implies equation (\ref{BigEqualAreaShock}) is satisfied with $S^*(\tau)=\frac{F(u_L)-F(u_R)}{u_L-u_R}\tau$ and $\frac{F(u_L)-F(u_R)}{u_L-u_R}\geq F'(u^*)$ for all $u^*\in T_{u_R}$ and $F'(u_R)\leq \frac{F(u_L)-F(u_R)}{u_L-u_R} \leq F'(u_L)$. We show that this is sufficient to prove that $UconF(u)=\frac{F(u_L)-F(u_R)}{u_L-u_R}(u-u_R)+F(u_R).$ 

Suppose with the above assumptions that there exists a point $\hat{u}$ with $F(\hat{u})>\frac{F(u_L)-F(u_R)}{u_L-u_R}(\hat{u}-u_R)+F(u_R)$, which would prove that $UconF(u)$ fails to yield the upper convex hull. We consider the functions $G(u)=F(u)-\left(\frac{F(u_L)-F(u_R)}{u_L-u_R}(u-u_R)+F(u_R)\right)$ and $D_{G}(u)=G'(u)(u-u_R)+G(u_R)=G'(u)(u-u_R)$ since $G(u_R)=0$. $G(u)$ attains its maximum over the interval $(u_R,u_L)$ at some point $v\in(u_R,u_L)$ which implies $G(v)>0$ since $G(\hat{u})>0$ by assumption. We note that since $G'(v)=0$ we have $D_G(v)=0$. We now let $w\in[u_R,v)$ such that $G(w)=0$ with $G(u)>0$ for all $u\in (w,v)$. Therefore $w$ is the root of $G$ such that $G(u)>0$ between $w$ and its maximum $v$. We know this always exists since $w=u_R$ is a possible choice if $G(u)>0$ for $u \in (u_R,v)$. Next we define $\displaystyle w^*=\argmax_{u\in[w,v]}G'(u)$.

Using the Lagrange remainder theorem for the first order Taylor expansion of $G$ about $w$, we obtain $G(u)=G(w)+G'(z)(u-w)=G'(z)(u-w)$ since $G(w)=0$, for some $z\in(w,u)$. This implies $G(w^*)=G'(z)(w^*-w)$ for some $z\in(w,w^*)$. Since $w\geq u_R$ and $G'(z)\leq G'(w^*)$ we have $G'(z)(w^*-w)\leq G'(w^*)(w^*-u_R)=D_G(w^*)$, implying $G(w^*)\leq D(w^*)$. This guarantees $G(u)=D(u)$ at some $v^*\in [w^*,v)$ by the intermediate value theorem, since $G(w^*)\leq D(w^*)$ and $D(v)<G(v)$. We need that $G(v^*)>0$ in order for the argument to be valid.  If $v^*\in(w^*,v)$ then we have $G(v^*)>0$ since $G(u)>0$ for all $u\in(w,v]$. Next we deal with the case that $D(v^*)=G(v^*)=0$, which implies $v^*=w^*=w$. Provided $w>u_R$ we have $G'(z)(w^*-w)< G'(z)(w^*-u_R)\leq D_G(w^*)$ which yields $G(w^*)<D(w^*)$ which implies $v^*\in (w^*,v)$ and therefore $G(v^*)>0$. This argument fails if $v^*=w^*=w=u_R$, but in this case this implies the maximum of $G'(u)$ between $u_R$ and $v$ occurs at $u_R$. But by assumption $G'(u_R)\leq 0$ which contradicts $G(v)>0$. Putting all of this together we have that there exists a point $v^*\in (u_R,u_L)$ such that $D(v^*)=G(v^*)>0$, which implies $G'(v^*)(v^*-u_R)=G(v^*)$. Using our definition of $G$ we obtain
\begin{align*}
\left(F'(v^*)-\frac{F(u_L)-F(u_R)}{u_L-u_R}\right)(v^*-u_R)&=F(v^*)-\left(\frac{F(u_L)-F(u_R)}{u_L-u_R}(v^*-u_R)+F(u_R)\right)\\
\Rightarrow F'(v^*)&=\frac{F(v^*)-F(u_R)}{v^*-u_R},
\end{align*}
which implies $v^*\in T_{u_R}$ and therefore there exists an equal-area curve from $v^*$ to $u_R$. Finally, since $G(v^*)>0$ we have 
\begin{align*}
\frac{F(v^*)-F(u_R}{v^*-u_R}&>\frac{F(u_L)-F(u_R)}{u_L-u_R},
\end{align*}
which implies $F'(v^*)>\frac{F(u_L)-F(u_R)}{u_L-u_R}$, by the tangency condition. This implies that the equal-area curve connecting $v^*$ has a larger shock position than the shock from $u_L$ to $u_R$, which is a contradiction of our initial assumption.
\end{proof}

Using Lemma \ref{EqualAreaTangent} and Lemma \ref{LemmaBigShock} we know how to apply the equal-area principle appropriately to select the correct shock which connects to $u_R$. As discussed above, when we don't have a secant line connecting to $u_R$ then particles in a neighbourhood of $u_R$ simply travel at their characteristic speed. The same happens to particles in a neighbourhood of $u^*$ which are outside of the shock.  This happens automatically when applying the equal-area approach since all particles travel at their characteristic speed and only the particles within the equal-area curve are replaced by a shock. The following Lemma describes how to proceed from $u_R$ in the absence of a shock and similarly how to proceed from $u^*$ in the presence of a shock from $u^*$ to $u_R$.

\begin{lemma}\label{LemmaIntTangents}
Assume a Riemann problem with left state $u_L$ and right state $u_R$ initially at $x=x_0$. Then there is an equal-area curve from $w\in (u_R,u_L)$ to $w^*\in (w,u_L)$ if and only if the secant line from $w$ to $w^*$ is tangent to $F$ at both $w$ and $w^*$.
\end{lemma}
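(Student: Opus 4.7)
The plan is to mimic the computation from Lemma \ref{EqualAreaTangent}, adapted to the situation where both endpoints $w$ and $w^*$ are interior to $(u_R,u_L)$. First, I would parametrize the relevant portion of the characteristic curve (\ref{ParCurve}) initiated from the vertical segment between $w^*$ and $w$ at $x=x_0$, namely $\langle x_0 + F'(v(s))\tau,\, v(s)\rangle$ with $v(s) = sw + (1-s)w^*$ for $s\in[0,1]$. Because both endpoints are interior states, no horizontal extension along $u=u_R$ or $u=u_L$ contributes to the closed curve, so the equal-area requirement (in the sense of Definition \ref{EqualAreaCurve}) reduces to exactly two conditions: (i) the closure condition $x(0)=x(1)$ and (ii) the vanishing of $\int_0^1 u(s)x'(s)\,ds$.

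For the forward direction, closure immediately forces $F'(w)=F'(w^*)$, since the endpoints of the arc sit at $x_0+F'(w^*)\tau$ and $x_0+F'(w)\tau$; denote this common value by $m$. Next, the area integral evaluates, via the same substitution and integration by parts that produced (\ref{EqualAreaLine2}), to
\begin{equation*}
\int_0^1 v(s)F''(v(s))(w-w^*)\tau\,ds \;=\; \tau\bigl[vF'(v)-F(v)\bigr]_{v=w^*}^{v=w},
\end{equation*}
so setting this to zero yields $wF'(w)-F(w)=w^*F'(w^*)-F(w^*)$. Substituting $F'(w)=F'(w^*)=m$ and rearranging gives
\begin{equation*}
F'(w)\;=\;F'(w^*)\;=\;\frac{F(w)-F(w^*)}{w-w^*},
\end{equation*}
which is precisely the statement that the secant line joining $(w,F(w))$ and $(w^*,F(w^*))$ is tangent to $F$ at both endpoints.

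For the converse, I would simply run the same computation in reverse. Assuming the tangency condition at both $w$ and $w^*$, the common slope $m=F'(w)=F'(w^*)$ immediately supplies $x(0)=x(1)$, and the identity $m=(F(w)-F(w^*))/(w-w^*)$ rearranges to $wF'(w)-F(w)=w^*F'(w^*)-F(w^*)$, which by the same integration by parts forces the area integral to vanish. Hence the parametric arc joining $w^*$ to $w$ is itself a closed equal-area curve.

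I do not anticipate a significant obstacle here; the only new subtlety relative to Lemma \ref{EqualAreaTangent} and Corollary \ref{Corollary} is that, with both endpoints interior, there is no horizontal-boundary contribution to absorb the mismatch $F'(u^*)-F'(u_R)$, and so the closure condition $F'(w)=F'(w^*)$ must be extracted as an independent first step before handling the area integral. Once this is recognized, the rest is the same integration by parts already used upstream.
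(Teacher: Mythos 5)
Your proposal is correct and follows essentially the same route as the paper's own proof: extract $F'(w)=F'(w^*)$ from the closure condition, reduce the area integral by the substitution $v=v(s)$ and integration by parts to $\bigl[vF'(v)-F(v)\bigr]_{w^*}^{w}=0$, and combine the two identities to obtain (or, conversely, to verify) the double-tangency of the secant. No gaps; the observation that there is no horizontal-boundary contribution for interior endpoints matches the paper's treatment.
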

\begin{proof}
Consider the Riemann problem with left state $u_L$ and right state $u_R$ initially at $x=x_0$. Suppose we have an equal-area curve from $w=u_Rs_0+(1-s_0)u_L$ to $w^*=u_Rs^*+(1-s^*)u_L$. This implies $x(s_0)=x_0+F'(w)\tau=x(s^*)=x_0+F'(w^*)\tau$, which implies that $F'(w)=F'(w^*)$. Additionally we have that $\int_{w}^{w^*}vF''(v)\text{d}v=0,$ which implies
\begin{equation*}
(F'(w^*)w^*-F(w^*))-(F'(w)w-F(w))=0.
\end{equation*}
Using that $F'(w)=F'(w^*)$ we obtain
\begin{align*}
F'(w)(w^*-w)+F(w)&=F(w^*), \quad \text{and}\\
F'(w)=\frac{F(w^*)-F(w)}{w^*-w}&=F'(w^*),
\end{align*}
which combine to imply that the secant line from $w$ to $w^*$ is tangent to $F$ at both $w$ and $w^*$.

Now we suppose that the secant line from $w$ to $w^*$ is tangent to $F$ at both $w$ and $w^*$.  This implies $F'(w)(w^*-w)+F(w)=F(w^*)$ an $F'(w)=F'(w^*)$. Parametrizing the initial shock front with $u_Rs+(1-s)u_L$ we take $w=u_Rs_0+(1-s_0)u_L$ and $w^*=u_Rs^*+(1-s^*)u_L$. Since $F'(w)=F'(w^*)$ we have that $x(s_0)=x(s^*)$. In this case we can compute the area about the vertical line at $x(s_0)$ from $w$ to $w^*$, which is given by
\begin{equation*}
\int_w^{w^*}vF''(v)\text{d}v=(F'(w^*)w^*-F(w^*))-(F'(w)w-F(w)).
\end{equation*}
Applying the tangency conditions yields
\begin{align*}
\int_w^{w^*}vF''(v)\text{d}v&=(F'(w)w^*-(F'(w)(w^*-w)+F(w)))-(F'(w)w-F(w))\\
&=0.
\end{align*} 
Therefore, this gives an equivalence between secant lines within the interior of $UconF$ and equal-area curves within $(u_R,u_L)$.
\end{proof}
 

Putting all of this together we consider the following algorithm.\\

\begin{center}
\textbf{The Generalized Equal-Area Principle}
\end{center}
\begin{enumerate}
\item Parametrize the initial left state by $\langle x_0-r, u_L \rangle$, for $r\geq 0$, the initial right state by $\langle x_0+q, u_R \rangle$, for $q\geq 0$, and the initial wave front by $\langle x_0, u_Rs+(1-s)u_L \rangle$, for $s\in[0,1]$. Then flow the particles under their characteristic flow for $\tau$ units of time. This yields the left state $\langle x_0-r+F'(u_L)\tau,u_L \rangle$, for $r\geq 0$, the right state $\langle x_0+q+F'(u_R)\tau,u_R \rangle$, for $q\geq 0$, and the initial wave front becomes $\langle x_0+F'(u_Rs+(1-s)u_L)\tau,u_Rs+(1-s)u_L \rangle$ for $s\in [0,1]$.
\item Search for equal-area curves from $\langle x_0+F'(u_Rs+(1-s)u_L)\tau,u_Rs+(1-s)u_L \rangle$ for $s^*\in(0,1)$ to the state $u_R$ and also check for an equal-area curve from $u_L$ to $u_R$. If such curves exist then select the one corresponding to largest shock position. If the largest shock position corresponds to an equal-area curve from $u_L$ to $u_R$ then replace $\langle x_0+F'(u_Rs+(1-s)u_L)\tau,u_Rs+(1-s)u_L \rangle$ for $s\in[0,1]$ by the vertical line at position $x_0+\frac{F(u_L)-F(u_R)}{u_L-u_R}\tau$ and change $r\geq 0$ to $r\geq (F'(u_L)-\frac{F(u_L)-F(u_R)}{u_L-u_R})\tau$ and $q$ to $q\geq (\frac{F(u_L)-F(u_R)}{u_L-u_R}-F'(u_R)\tau$ and the algorithm concludes. If the largest shock connects from parameter value $s^*\in(0,1)$, then replace $\langle x_0+F'(u_Rs+(1-s)u_L)\tau,u_Rs+(1-s)u_L \rangle$ for $s\in[s^*,1]$ by the vertical line at position $x_0+F'(u_Rs+(1-s)u_L)\tau$ and the right state becomes defined only for $q\geq (F'(u_Rs^*+(1-s^*)u_L)-F'(u_R))\tau$. If no shock to $u_R$ exists, proceed to the next step with $s^*=1$.
\item Search for the largest $\hat{s}\in (0,s^*]$ such that there is an equal-area curve with end point $\langle x_0+F'(u_R\hat{s}+(1-\hat{s})u_L)\tau, u_R\hat{s}+(1-\hat{s})u_L \rangle$. If no shock exists the algorithm concludes.  If the shock connects to $u_L$ then $\langle x_0+F'(u_Rs+(1-s)u_L)\tau,u_Rs+(1-s)u_L \rangle$ for $s\in[0,\hat{s}]$ is replaced by the vertical line at position $x_0+F'(u_R\hat{s}+(1-\hat{s})u_L)\tau$ and the left state becomes defined for $r\geq (F'(u_L)-F'(u_R\hat{s}+(1-\hat{s})u_L))\tau$ and then the algorithm concludes. Otherwise the shock connects to some $s^{**}\in(0,s^*)$ and $\langle x_0+F'(u_Rs+(1-s)u_L)\tau,u_Rs+(1-s)u_L \rangle$ for $s\in[s^{**},s^*]$ is replaced by the vertical line at position $x_0+F'(u_Rs^*+(1-s^*)u_L)\tau$. In this case redo this step starting from $s^{**}$.
\end{enumerate}

\begin{theorem}\label{TheoremEquivalenceShock}
Assume a Riemann problem at $x=x_0$ between two constants states, $u_L$ on the left and $u_R$ on the right with $u_R<u_L$. Then, the Generalized Equal-Area Principle presented above generates a weak solution which corresponds exactly to $UconF(u)$. In particular, each vertical line of the weak solution between heights $u$ and $u^*$ corresponds to a secant line from $F(u)$ to $F(u^*)$ while the remaining portions of the curve corresponds to intervals where $UconF(u)=F(u)$.
\end{theorem}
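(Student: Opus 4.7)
The plan is to prove the theorem by matching each step of the Generalized Equal-Area Principle to a piece of $UconF(u)$, iteratively from $u_R$ toward $u_L$, using the lemmas already established. The correspondence I will establish is: vertical shocks produced by the algorithm correspond exactly to the secant portions of $UconF$, while the remaining non-replaced pieces of the parametric curve (\ref{ParCurve}) correspond to intervals where $UconF = F$.

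First I would analyze Step 2. By Lemma \ref{EqualAreaTangent}, the equal-area curves that connect to the state $u_R$ are in bijection with the elements of $T_{u_R}$, and by Lemma \ref{LemmaBigShock}, the equal-area curve from $u_L$ directly to $u_R$ exists precisely when $UconF$ is the single secant from $u_R$ to $u_L$, and its shock position is the largest of all. Using Lemma \ref{TangentLemma}, the leftmost piece of $UconF$ attached to $u_R$ falls into exactly three mutually exclusive cases: (i) the secant from $u_R$ to $u_L$, (ii) a secant from $u_R$ to the tangent point $u^{*} \in T_{u_R}$ that maximizes $F'(u^{*})$, or (iii) $UconF(u) = F(u)$ in a right neighbourhood of $u_R$ when $T_{u_R}$ is empty. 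In case (i), Lemma \ref{LemmaBigShock} shows Step 2 selects the $u_L$-to-$u_R$ equal-area curve and terminates correctly. In case (ii), among all equal-area curves connecting to $u_R$, Lemma \ref{EqualAreaTangent} guarantees the one at shock position $x_0 + F'(u^{*})\tau$ has the maximum shock position (because $F'(u^{*})$ is maximal over $T_{u_R}$), so Step 2 selects it and passes to Step 3. In case (iii), no equal-area curve to $u_R$ exists, so Step 2 defers to Step 3 with $s^{*}=1$; here Lemma \ref{TangentLemma} simultaneously guarantees $UconF = F$ locally near $u_R$, consistent with leaving (\ref{ParCurve}) unmodified.

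Next, I would argue by induction on the number of remaining pieces of $UconF$ still to be constructed. The inductive hypothesis is that, after $k$ iterations of the algorithm, the portion of the weak solution to the right of the current parameter value corresponds exactly to the rightmost $k$ pieces of $UconF$. The inductive step uses Lemma \ref{LemmaIntTangents} to identify interior equal-area curves in Step 3 with interior secant lines of $UconF$ (which are tangent to $F$ at both endpoints), and Corollary \ref{Corollary} to handle the terminal case where the final secant connects to $u_L$. Lemma \ref{Branch2} ensures that between two consecutive secant segments of $UconF$ one has $F'' \leq 0$, which guarantees the corresponding portion of (\ref{ParCurve}) stays single-valued and does not overturn, supplying the ``no equal-area curve exists'' exit condition in Step 3 precisely on those intervals where $UconF = F$.

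The main obstacle will be verifying that the ``largest shock position'' criterion in Steps 2 and 3 always selects the secant of $UconF$ rather than some other candidate equal-area curve. This amounts to showing that among all tangent-type equal-area attachments to the current left endpoint, the one realizing $UconF$ has strictly the largest tangent slope at the attaching side; this is essentially forced by Definition \ref{ConHull} and the maximality clause of Lemma \ref{TangentLemma}, but it needs a careful comparison at each iteration because in Step 3 the starting point moves leftward through the tangent-point sequence. Once this is established, concatenating the inductive steps yields the decomposition of $UconF$ into secant segments (replaced by vertical shocks at positions $x_0 + F'(u^{*})\tau$, which are the correct Rankine--Hugoniot speeds by the tangency identity (\ref{TangentCon})) and $F$-agreement intervals (unaltered characteristic portions), completing the proof.
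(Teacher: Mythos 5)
Your proposal follows essentially the same route as the paper's proof: it matches Step 2 of the algorithm to Lemmas \ref{TangentLemma}, \ref{EqualAreaTangent} and \ref{LemmaBigShock}, and the iteration of Step 3 to Lemmas \ref{LemmaIntTangents}, \ref{Branch2} and Corollary \ref{Corollary}, with the shock-position maximization reduced to maximizing $F'(u^*)$ over $T_{u_R}$ exactly as the paper does. Your explicit induction on the pieces of $UconF(u)$ is just a slightly more formal packaging of the paper's ``repeat Step 3 from $s^{**}$'' argument, and the obstacle you flag is resolved the same way the paper resolves it, via the maximality clause of Lemma \ref{TangentLemma} and the equivalence of largest $\hat{s}$ with smallest $\hat{u}$ in Lemma \ref{Branch2}.
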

\begin{proof}
Here we show that indeed the Generalized Equal-Area Principle constructs the weak solution which corresponds to the upper convex hull of $F$. Step 2 attempts to find equal-area curves which connect to $u_R$. If the equal-area curve connecting to $u_R$ with largest shock location connects to $\langle x_0+F'(u_Rs^*+(1-s^*)u_L)\tau,u_Rs^*+(1-s^*)u_L \rangle$ for some $s^*\in(0,1)$ then, as described in Lemma \ref{EqualAreaTangent}, this implies that $u_Rs^*+(1-s^*)u_L\in T_{u_R}$ and it corresponds to the steepest secant of all members of $T_{u_R}$. By Lemma \ref{TangentLemma}, this implies that $UconF(u)$ contains a secant line from $u_R$ to $u_Rs^*+(1-s^*)u_L$. Similarly, if the equal-area curve corresponding to the largest shock location is from $u_L$ to $u_R$, then by the converse of Lemma \ref{LemmaBigShock} this implies that the secant line from $u_R$ to $u_L$ is exactly $UconF(u)$. We proceed to Step 3 provided there aren't any equal-area curves connecting to $u_R$, or if the largest shock location occurs at $\langle x_0+F'(u_Rs^*+(1-s^*)u_L)\tau,u_Rs^*+(1-s^*)u_L \rangle$ for some $s^*\in(0,1)$. 

Step 3 begins by searching for the largest $\hat{s}\in (0,s^*]$ where $\langle x_0+F'(u_R\hat{s}+(1-\hat{s})u_L)\tau, u_R\hat{s}+(1-\hat{s})u_L \rangle$ is the endpoint of an equal-area curve. This equal-area curve connects to some $s^{**}\in(0,\hat{s})$, or to $u_L$. In the case that the shock connects from $s^{**}$ to $\hat{s}$, we know by Lemma \ref{LemmaIntTangents} that this corresponds to a secant line of $F(u)$ which is tangent at both $u_R\hat{s}+(1-\hat{s})u_L$ and $u_Rs^{**}+(1-s^{**})u_L$. In the case that the shock at $\hat{s}$ connects to $u_L$, then Corollary \ref{Corollary} tells us that this corresponds to a secant line of $F$ from $u_R\hat{s}+(1-\hat{s})u_L$ to $u_L$ which is tangent at $u_R\hat{s}+(1-\hat{s})u_L$. Since $u_Rs+(1-s)u_L$ parametrizes from $u_L$ to $u_R$, finding the largest $\hat{s}$ with this property is equivalent to finding the smallest $\hat{u}\in (u_Rs^*+(1-s^*)u_L , u_L)$ with this property, which is exactly what is described in Lemma \ref{Branch2}, which completes the proof.  
\end{proof}

\begin{remark}\label{Reverse}
The Generalized Equal-Area Principle Algorithm can also be applied in the reverse direction. Starting from $u_L$ and searching for the shock with the smallest location and so on. Depending on how the parametric curve is parametrized this may be more convenient from an algorithmic perspective.  In terms of the flux function, this means that the upper convex hull can be constructed through the same method as described in Lemma \ref{TangentLemma} and Lemma \ref{Branch2} but in reverse. Therefore, instead searching for members of $T_{u_R}$, tangent lines from some $u^*\in(u_R,u_L)$ attaching to $u_L$ satisfying $S\leq F'(u_L)$ are required. If no tangent lines exists then $UconF(u)=F(u)$ in a neighbourhood of $u_L$. Progressing from $u_L$ in the absence of a tangent line, or from $u^*$ corresponding to the shallowest tangent line, towards $u_R$ is done in the same way as described in Lemma \ref{Branch2} except traversing from right to left.
\end{remark}

\begin{theorem}\label{TheoremEquivalenceRarefaction}
Assume a Riemann problem at $x=x_0$ between two constants states, $u_L$ on the left and $u_R$ on the right with $u_L<u_R$. Then, the Generalized Equal-Area Principle algorithm presented above generates a weak solution which corresponds exactly to $LconF(u)$, the lower convex envelope of $F$. In particular, each vertical line of the weak solution between heights $u$ and $u^*$ corresponds to a secant line from $F(u)$ to $F(u^*)$ while the remaining portions of the curve corresponds to intervals where $LconF(u)=F(u)$.
\end{theorem}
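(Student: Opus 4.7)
The plan is to mirror the proof of Theorem \ref{TheoremEquivalenceShock} by exploiting the natural duality between upper and lower convex envelopes, together with the reverse direction of the algorithm described in Remark \ref{Reverse}. The core observation is that the conservation law $u_t + (F(u))_x = 0$ is invariant under the simultaneous substitutions $x \mapsto -x$ and $F \mapsto -F$: the characteristic ODE $\dot{x} = F'(u)$ transforms correctly, and Riemann data with $u_L$ on the left and $u_R$ on the right becomes Riemann data for $\tilde{F} = -F$ with $u_R$ on the left and $u_L$ on the right. Under this reflection the roles of left and right states swap (so $u_L < u_R$ becomes $u_R < u_L$ for the reflected problem), while $Ucon(\tilde{F}) = -LconF$, and ``largest shock position'' in the reflected problem corresponds to ``smallest shock position'' in the original.

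First, I would state and prove lower-convex-hull analogs of Lemma \ref{TangentLemma}, Lemma \ref{Branch2}, Lemma \ref{EqualAreaTangent}, Lemma \ref{LemmaBigShock}, and Lemma \ref{LemmaIntTangents}. Each of these follows either by a direct argument with the appropriate inequality reversals (replacing $\geq$ by $\leq$ in the majorization criteria of Definition \ref{ConHull}, and replacing ``maximum $F'(u^*)$'' by ``minimum $F'(u^*)$''), or, more efficiently, by invoking the reflection $F \mapsto -F$, $u \mapsto -u$ on the already-established upper-hull statements. In particular, an equal-area curve from $u_L$ to $u^*\in(u_L,u_R)$ corresponds to a line segment through $(u_L,F(u_L))$ tangent to $F$ at $(u^*,F(u^*))$ with $F'(u^*) \leq F'(u_L)$; an equal-area curve from $u_L$ to $u_R$ corresponds to the secant from $(u_L,F(u_L))$ to $(u_R,F(u_R))$ being exactly $LconF$ and giving the smallest shock position among all equal-area shocks attached to $u_L$; and equal-area curves strictly interior to $(u_L,u_R)$ correspond to bitangent lines of $F$.

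Next, I would execute the Generalized Equal-Area Principle (run in reverse as per Remark \ref{Reverse}) step by step in the $u_L < u_R$ setting. Starting from $u_L$, the analog of Step 2 searches for the equal-area curve attached to $u_L$ with the smallest shock position, which by the lower analogs of Lemmas \ref{EqualAreaTangent} and \ref{LemmaBigShock} is precisely the first branch of $LconF$ emanating from $u_L$, namely either a tangent segment to the shallowest tangent in the lower analog of the set $T_{u_L}$, or the full secant from $u_L$ to $u_R$ if $LconF$ is that single line. Step 3 then proceeds inductively: on any characteristic-flow portion between consecutive shocks, the lower analog of Lemma \ref{Branch2} forces $F'' \geq 0$, so $F$ coincides with $LconF$ there; each subsequent internal shock is, by the lower Lemma \ref{LemmaIntTangents}, a bitangent line of $LconF$; and the process terminates at $u_R$.

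The only genuine subtlety, and the main thing to verify carefully, is the orientation convention: in the $u_R < u_L$ case the algorithm selects the largest shock position in order to match $UconF$ from above, whereas in the present $u_L < u_R$ case the selection must pick the smallest shock position in order to match $LconF$ from below. Once this is reconciled against Remark \ref{Reverse}, the inductive argument of Theorem \ref{TheoremEquivalenceShock} transfers verbatim, and the weak solution produced by the algorithm is exactly the characteristic profile corresponding to $LconF$ between $u_L$ and $u_R$, with vertical shock lines replacing the secant segments of $LconF$ and characteristic flow filling in the intervals where $LconF(u) = F(u)$.
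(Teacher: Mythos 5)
Your proposal is correct and follows essentially the same route as the paper, which likewise reduces the $u_L<u_R$ case to the upper-hull machinery of Lemmas \ref{TangentLemma} and \ref{Branch2} run in reverse from $u_L$ (with the inequality $F'(u^*)\leq F'(u_L)$ and the shallowest tangent replacing the steepest), identified with the reverse algorithm of Remark \ref{Reverse}. Your reflection argument ($x\mapsto -x$, $F\mapsto -F$, so that $Ucon(-F)=-LconF$ and largest shock position becomes smallest) is a clean formal device for transferring the lemmas that the paper leaves implicit, but it does not change the substance of the proof.
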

\begin{proof} 
The lower convex hull between $u_L$ and $u_R$, with $u_L<u_R$ can be constructed in the same manner as described in Lemma \ref{TangentLemma} and Lemma \ref{Branch2} but instead starting from $u_L$. The first step is to search for tangent lines from some $u^*\in(u_L, u_R)$ connecting to $u_L$ which satisfies $F'(u^*)\leq F'(u_L)$ which does not intersect $F$ anywhere other than $u_L$ and $u^*$ between $u_L$ and $u_R$. If such a tangent line exists then the shallowest such tangent line is the first portion of the lower convex hull. Otherwise $LconF(u)=F(u)$ in a neighbourhood of $u_L$. The process described in Lemma \ref{Branch2} is followed identically, searching for the first $u^{**}$ whose tangent line is tangent to some $\bar{u}$ with $u^{**}<\bar{u}$. We notice that this is exactly the reverse algorithm described in Remark \ref{Reverse}, which means that the generalized equal-area principle algorithm will construct the lower convex hull when $u_L<u_R$. 
\end{proof}

In the following section we present some numerical examples which show that the Generalized Equal-Area Principle indeed constructs the weak solution corresponding to the appropriate convex envelope of the flux function $F$.

\section{Numerical Results}\label{Numerics}

In this section we present several Riemann problems for which the flux function is non-convex. The convergence plots associated with the convex envelopes measure the maximum error among all shock locations, or equivalently, the difference in slopes of the secant lines between the numerical and exact convex envelope. Each example utilizes the fifth order accurate exactly area-preserving parametric interpolation \cite{mcgregor2019area} discussed in Section \ref{AreaInterpolation}. We therefore expect to locate each shock to least fifth order accuracy.

\begin{example}\label{Example1}
\begin{equation}
\begin{cases}
u_t+((u^2-2u)^2)_x=0\label{Example1Equation}\\
u(x,0)=
\begin{cases}
2 \quad x<0\\
0 \quad x\geq 0
\end{cases},
\end{cases}
\end{equation}
In this example $u_R<u_L$, therefore the weak solution corresponds to the upper convex hull of $F$. Figure \ref{Example1Plots} shows that the weak solution contains two shocks separated by a rarefaction. Looking to Figure \ref{Example1Envelope} we see the corresponding upper convex hull and the anticipated fifth order convergence.
\begin{figure}[!ht]
\begin{center}
\includegraphics[width=40mm,height=30mm]{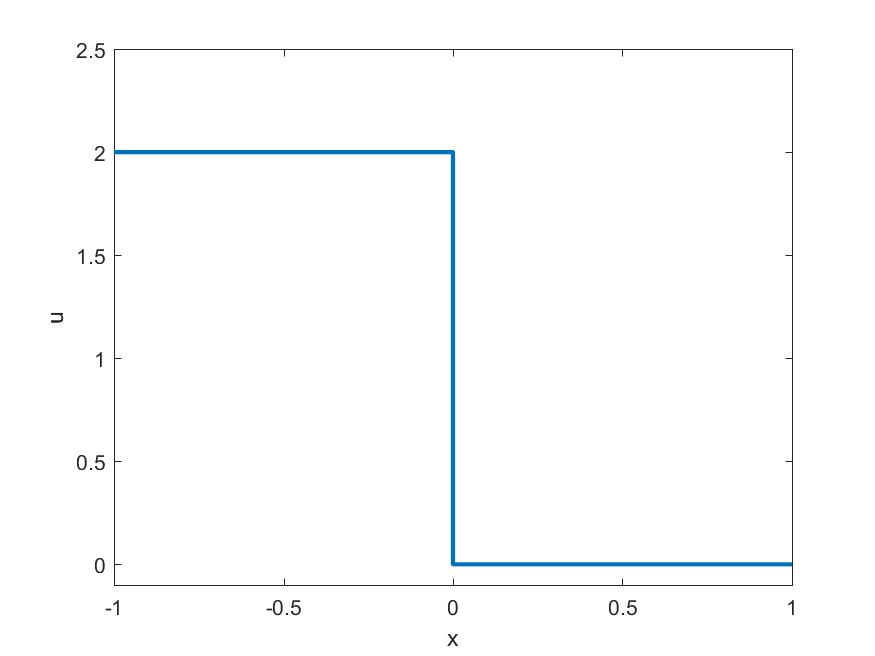}
\includegraphics[width=40mm,height=30mm]{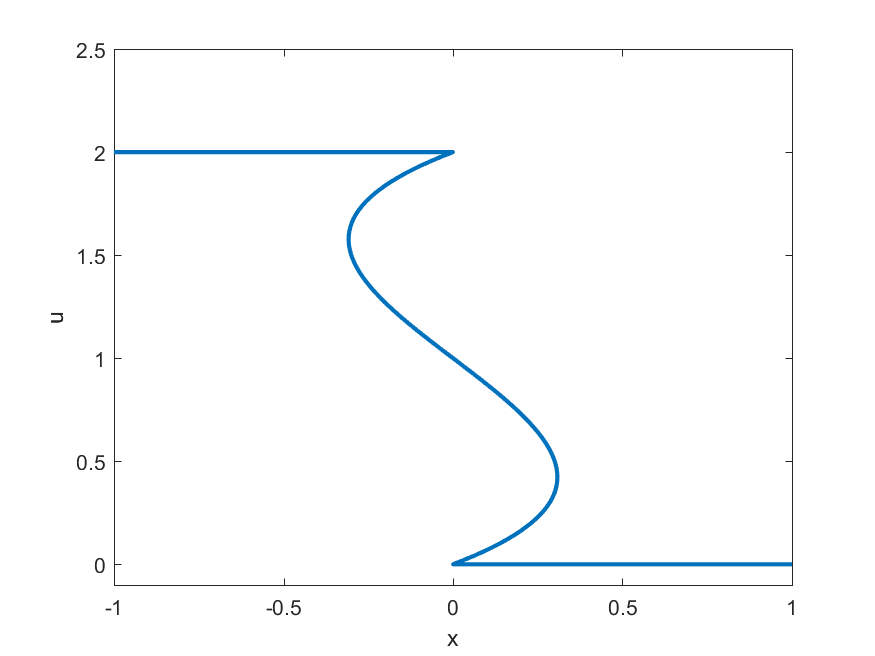}
\includegraphics[width=40mm,height=30mm]{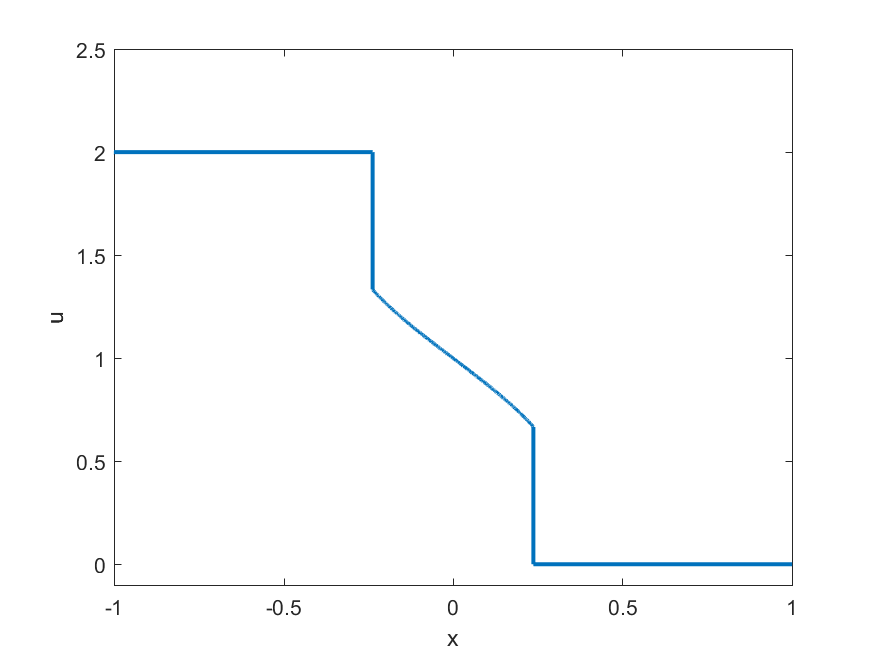}
\end{center}
\caption{Initial condition (left), characteristic flow (centre) and the Generalized Equal-Area Projection (right) associated with Example \ref{Example1}.}
\label{Example1Plots}
\end{figure}

\begin{figure}[!ht]
\begin{center}
\includegraphics[width=60mm,height=40mm]{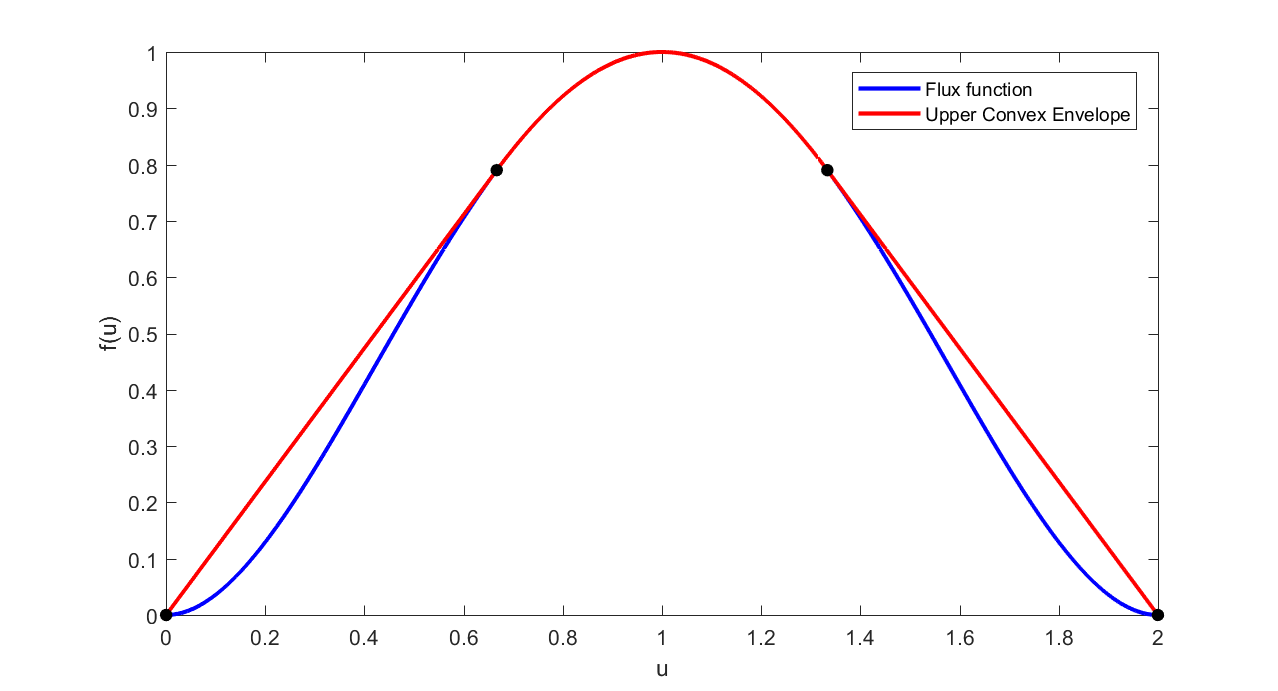}
\includegraphics[width=60mm,height=40mm]{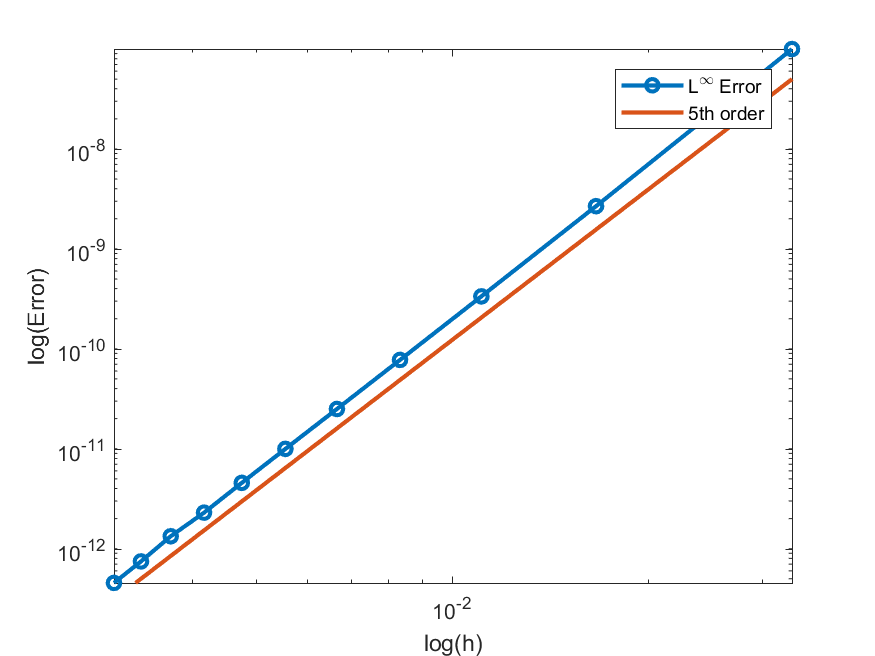}
\end{center}
\caption{Convex envelope (left) and convergence (right) corresponding to Example \ref{Example1}.}
\label{Example1Envelope}
\end{figure}

\end{example}

The next example is the same as Example \ref{Example1} except that the initial states are reversed.
\begin{example}\label{Example2}
\begin{equation}
\begin{cases}
u_t+((u^2-2u)^2)_x=0\label{Example2Equation}\\
u(x,0)=
\begin{cases}
0 \quad x<0\\
2 \quad x\geq 0
\end{cases},
\end{cases}
\end{equation}
In this case we expect to capture the weak solution corresponding to the lower convex hull, since $u_R>u_L$. Indeed, as shown in Figure \ref{Example2Envelope} we capture exactly this, with the resulting weak solution being a standing wave at $x=0$ as shown in Figure \ref{Example2Plots}. We omit the convergence plot in this example as the shock position is located exactly for all partitions.

\begin{figure}[!ht]
\begin{center}
\includegraphics[width=40mm,height=30mm]{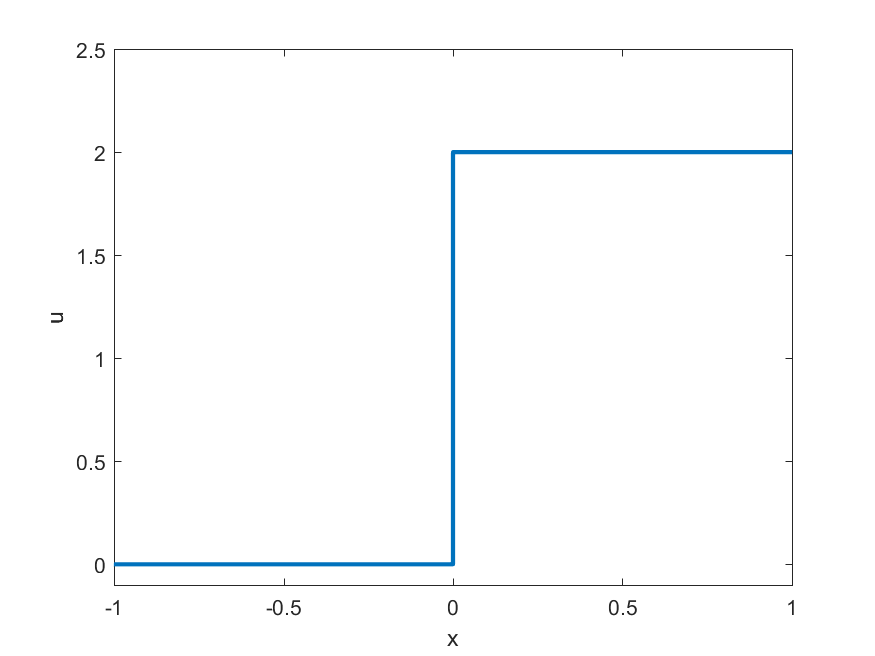}
\includegraphics[width=40mm,height=30mm]{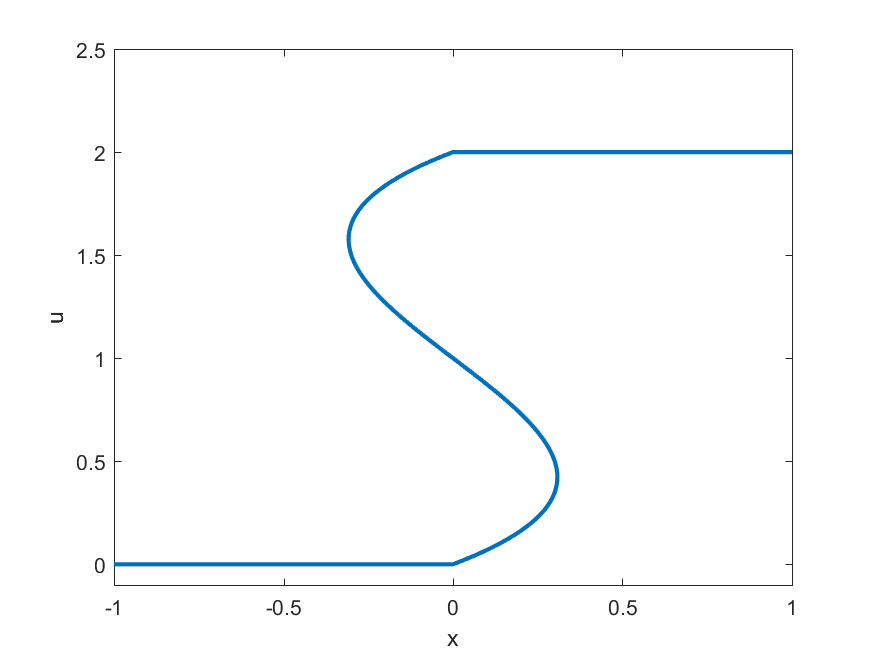}
\includegraphics[width=40mm,height=30mm]{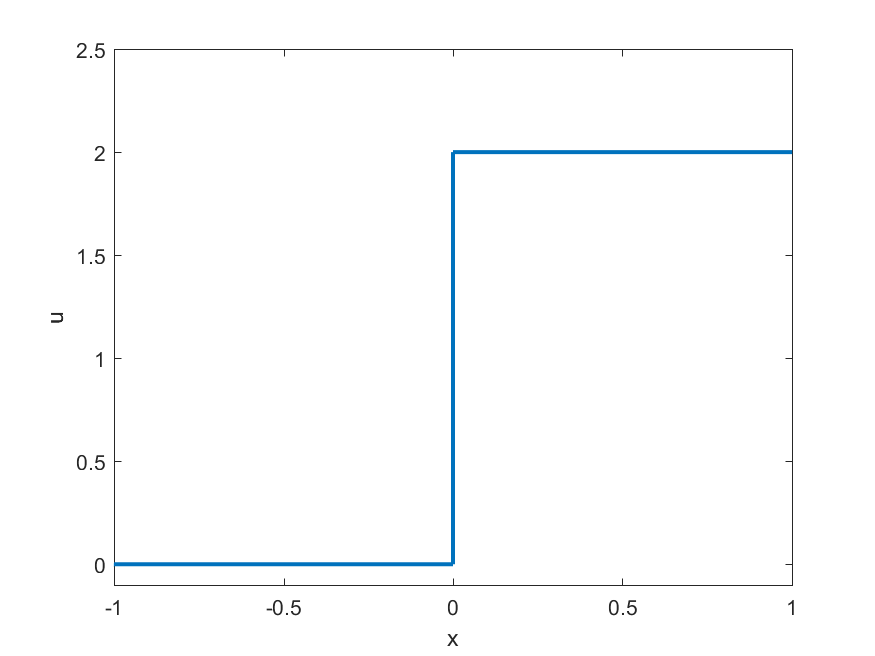}
\end{center}
\caption{Initial condition (left), characteristic flow (centre) and the Generalized Equal-Area Projection (right) associated with Example \ref{Example2}.}
\label{Example2Plots}
\end{figure}

\begin{figure}[!ht]
\begin{center}
\includegraphics[width=60mm,height=40mm]{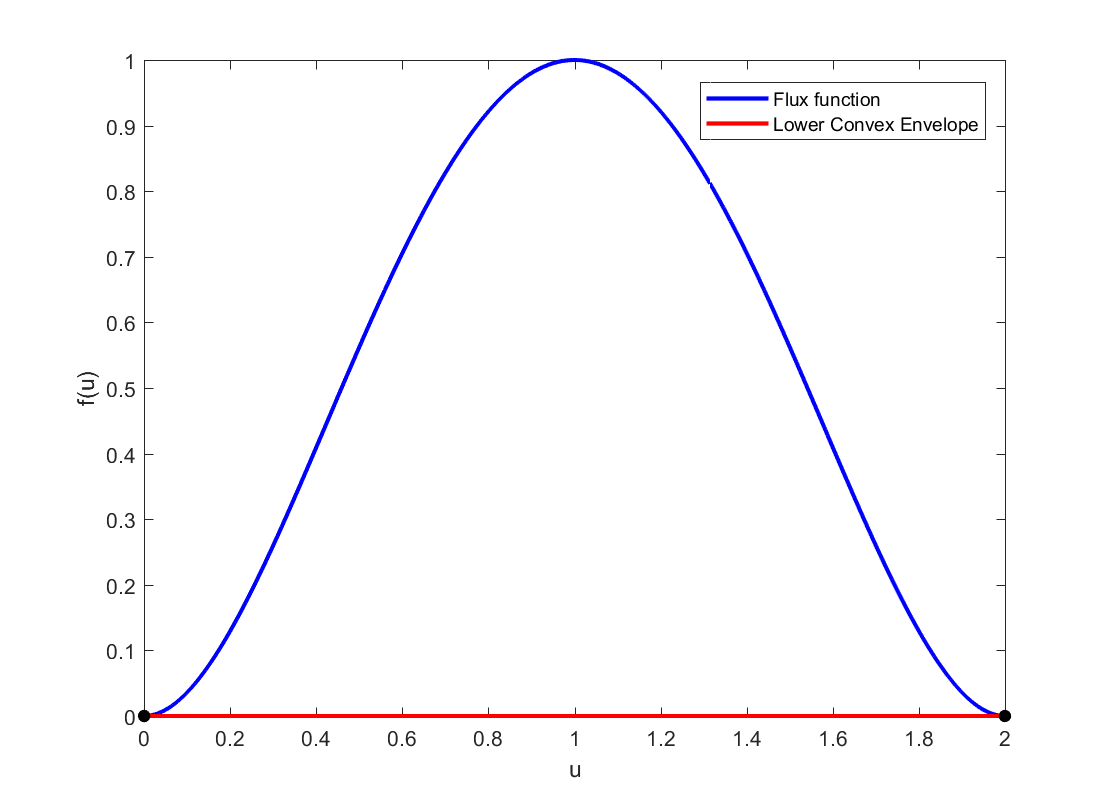}
\end{center}
\caption{Convex envelope corresponding to Example \ref{Example2}}
\label{Example2Envelope}
\end{figure}

\end{example}

\begin{example}\label{Example3}
\begin{equation}
\begin{cases}
u_t+(\frac{1}{4}u^4-\frac{5}{3}u^3+3u^2)_x=0\label{Example3Equation}\\
u(x,0)=
\begin{cases}
0 \quad x<0\\
3.5 \quad x\geq 0
\end{cases},
\end{cases}
\end{equation}
In this example, as seen in Figure \ref{Example3Plots}, our method predicts two rarefactions with a shock between them. This indeed corresponds to the lower convex hull as shown in Figure \ref{Example3Envelope}.

\begin{figure}[!ht]
\begin{center}
\includegraphics[width=40mm,height=30mm]{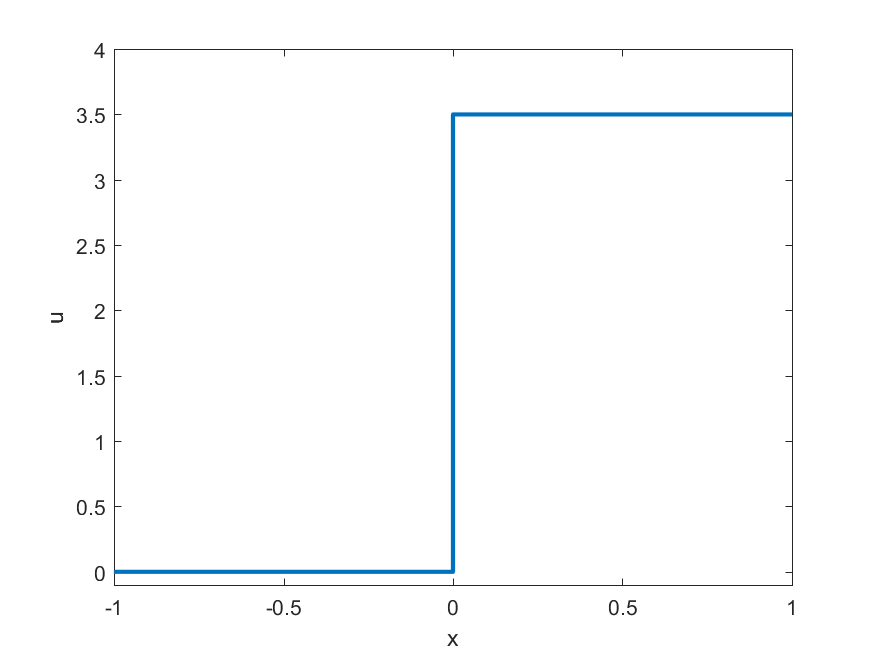}
\includegraphics[width=40mm,height=30mm]{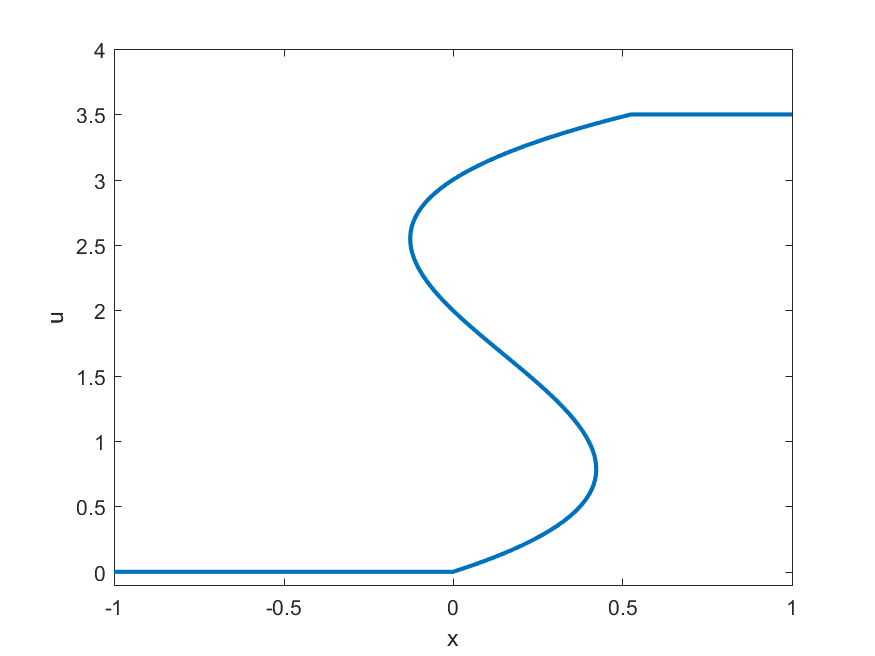}
\includegraphics[width=40mm,height=30mm]{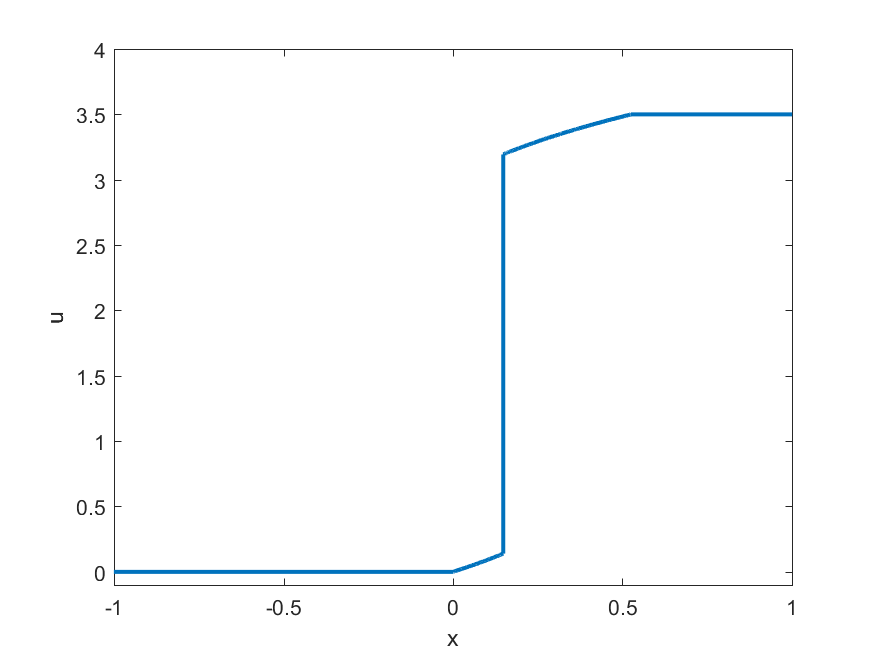}
\end{center}
\caption{Initial condition (left), characteristic flow (centre) and the Generalized Equal-Area Projection (right) associated with Example \ref{Example3}.}
\label{Example3Plots}
\end{figure}

\begin{figure}[!ht]
\begin{center}
\includegraphics[width=60mm,height=40mm]{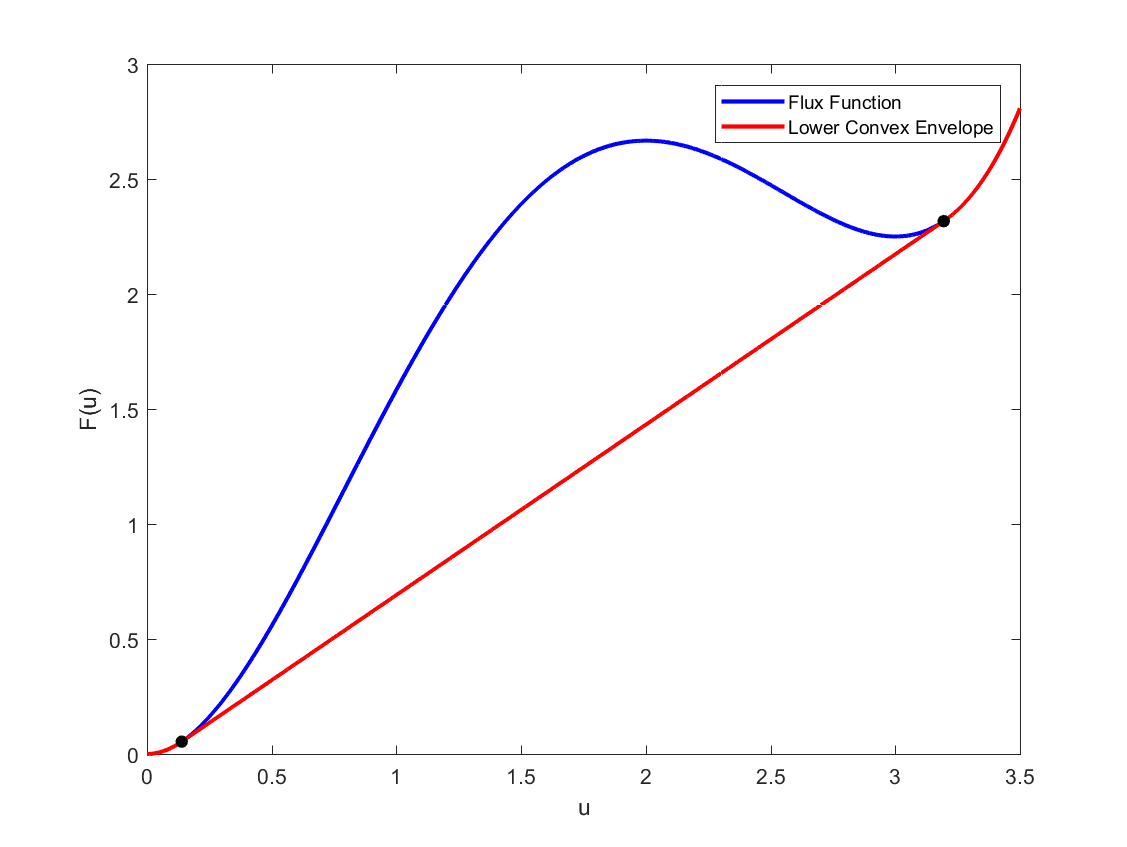}
\includegraphics[width=60mm,height=40mm]{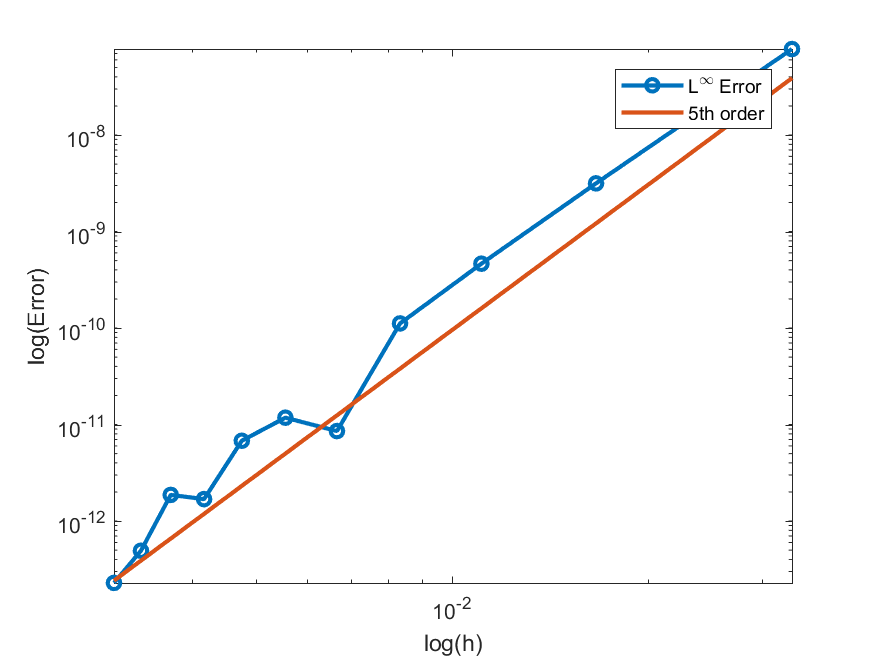}
\end{center}
\caption{Convex envelope (left) and convergence (right) corresponding to Example \ref{Example3}}
\label{Example3Envelope}
\end{figure}

\end{example}

The following example is the Buckley-Leverett equation for two-phase flow discussed in \cite{Buckley}.
\begin{example}\label{Example4}
\begin{equation}
\begin{cases}
u_t+(\frac{u^2}{u^2+\frac{1}{2}(1-u)^2})_x=0\label{Example4Equation}\\
u(x,0)=
\begin{cases}
1 \quad x<0\\
0 \quad x\geq 0
\end{cases},
\end{cases}
\end{equation}
In this example, as seen in Figure \ref{Example3Plots}, our method correctly predicts  a single rarefactions which connects to a shock. This indeed corresponds to the upper convex hull as shown in Figure \ref{Example3Envelope}. The convergence plot for this problem is quite noisy, although clearly we still obtain the desired fifth order accuracy. This is due to the rapidly growing derivatives of the flux function and the fact that we unable to take very small time steps without running into round-off error. This convergence plot was obtained by starting with 10 interpolants and doubling at each iteration. 

\begin{figure}[!ht]
\begin{center}
\includegraphics[width=40mm,height=30mm]{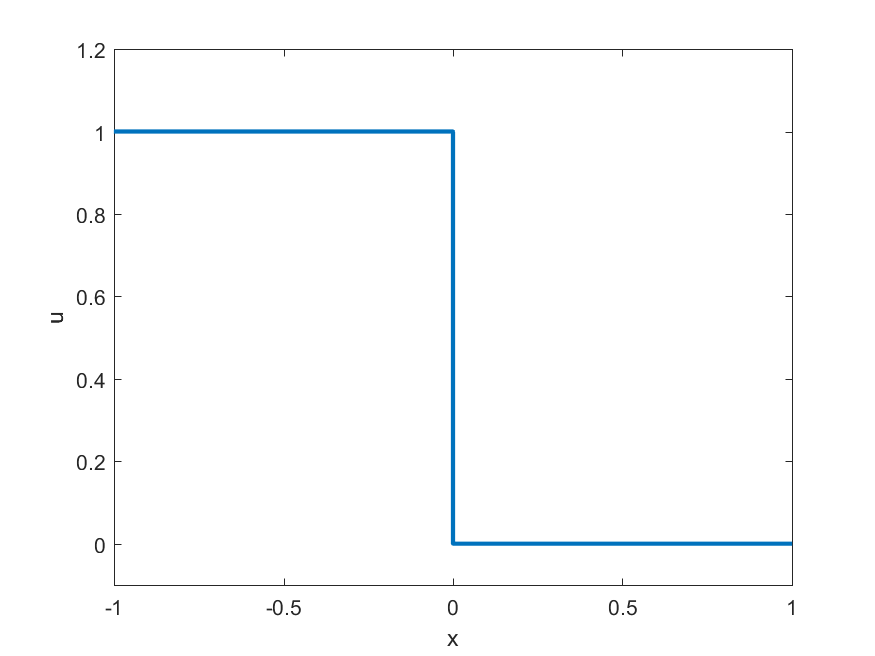}
\includegraphics[width=40mm,height=30mm]{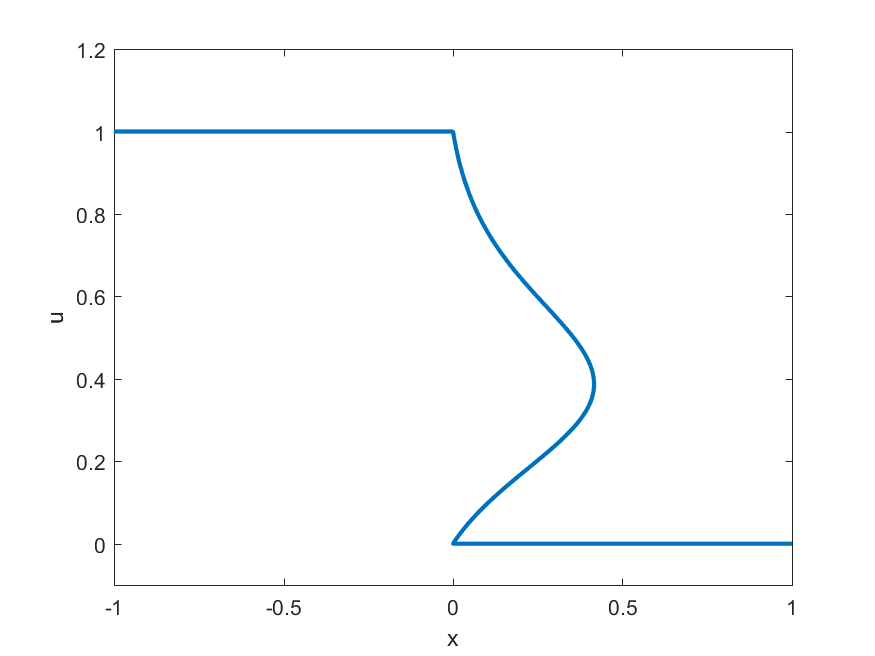}
\includegraphics[width=40mm,height=30mm]{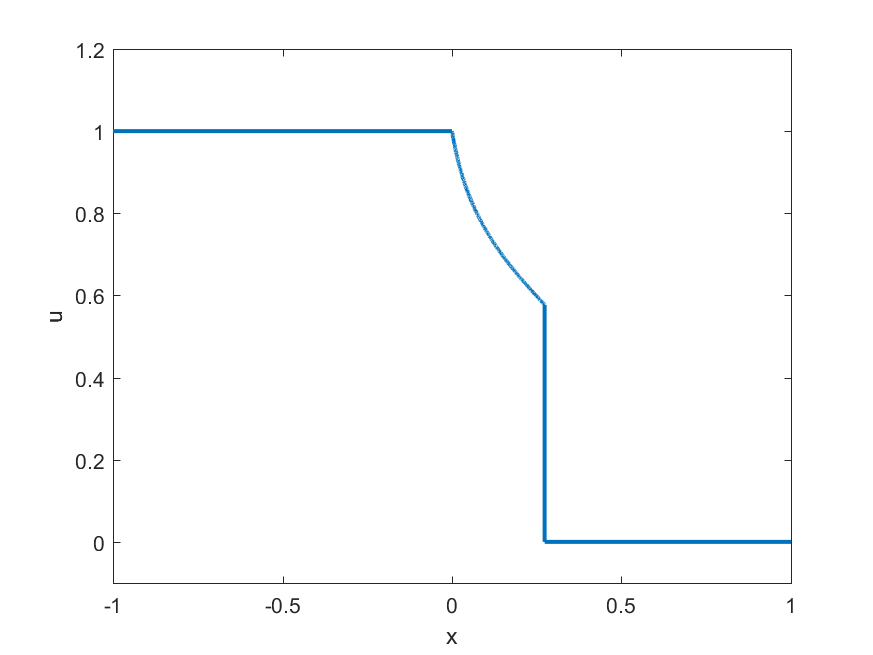}
\end{center}
\caption{Initial condition (left), characteristic flow (centre) and the Generalized Equal-Area Projection (left) associated with Example \ref{Example4}.}
\label{Example3Plots}
\end{figure}

\begin{figure}[!ht]
\begin{center}
\includegraphics[width=60mm,height=40mm]{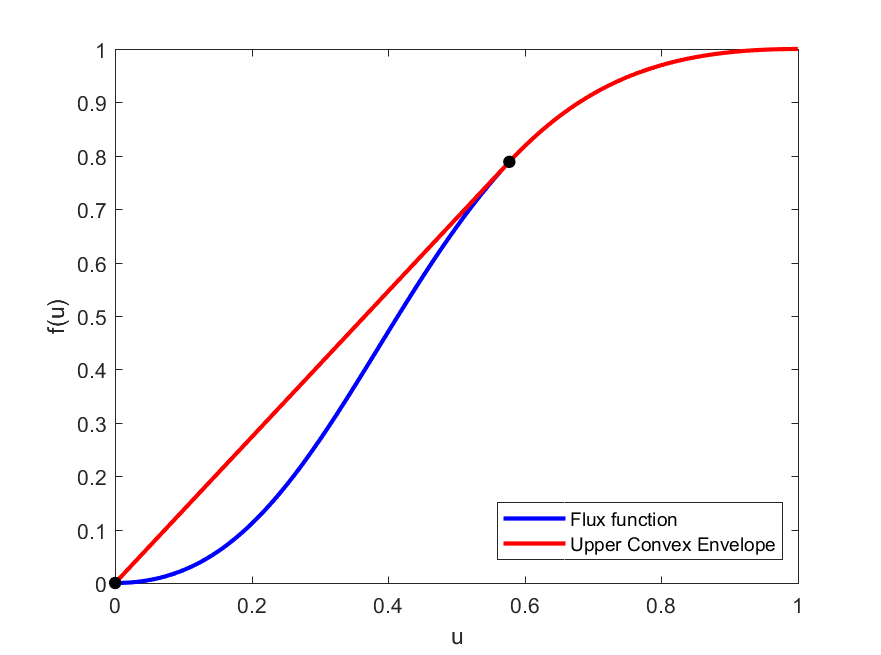}
\includegraphics[width=60mm,height=40mm]{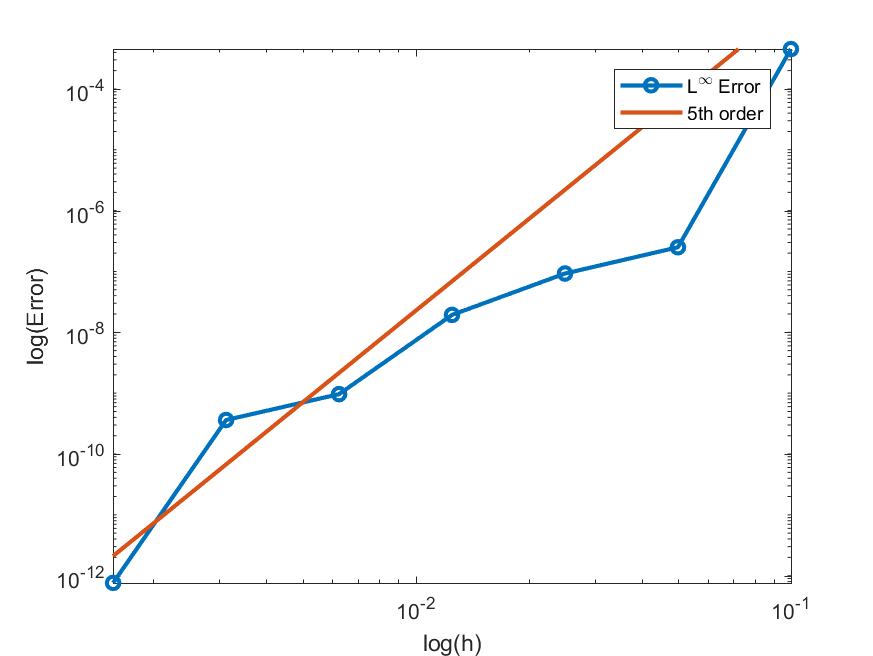}
\end{center}
\caption{Convex envelope (left) and convergence (right) corresponding to Example \ref{Example4}}
\label{Example3Envelope}
\end{figure}

\end{example}

For the next example we return to the same flux function as in Example \ref{Example3} however we begin with a different initial condition.

\begin{example}\label{Example5}
\begin{equation}
\begin{cases}
u_t+(\frac{1}{4}u^4-\frac{5}{3}u^3+3u^2)_x=0\label{Example5Equation}\\
u(x,0)=
\begin{cases}
0 \quad x<0\\
5 \quad 0\leq x< 5\\
0 \quad x\geq 5
\end{cases},
\end{cases}
\end{equation}
Here we see a box initial condition, which means we should have the weak solution given by the lower convex hull for the left Riemann problem and the upper convex hull for the right Riemann problem. The Generalized Equal-Area Principle Algorithm, which is being applied using the reverse algorithm discussed in Remark \ref{Reverse}, is shown step by step in Figure \ref{Example5Plots}. The resulting weak solution indeed captures the lower convex hull for the left Riemann problem and the upper for the right Riemann problem. Putting this all together in Figure \ref{Example5Envelope} yields the convex hull of the graph of the flux function $F$.

\begin{figure}[!ht]
\begin{center}
\includegraphics[width=40mm,height=30mm]{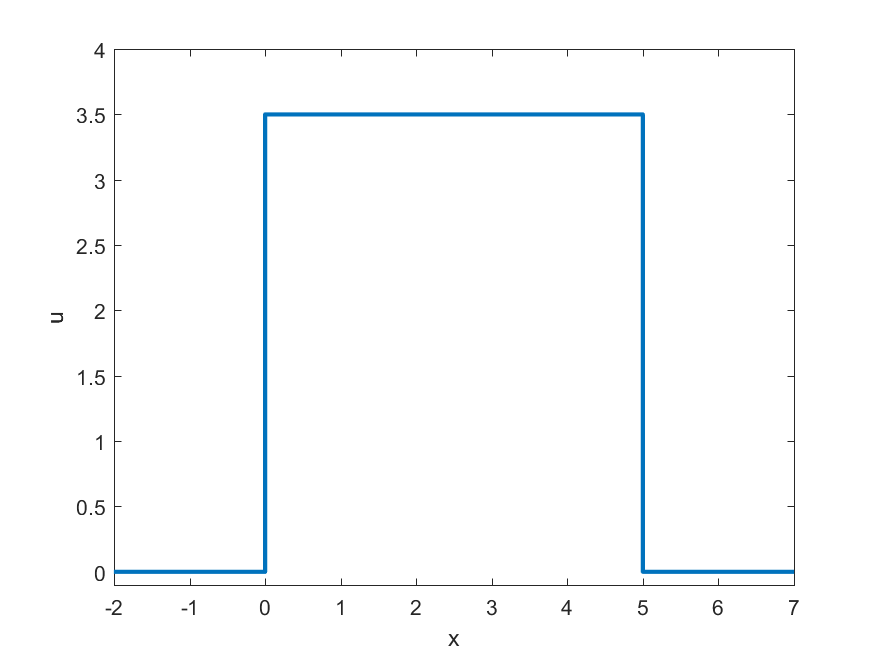}
\includegraphics[width=40mm,height=30mm]{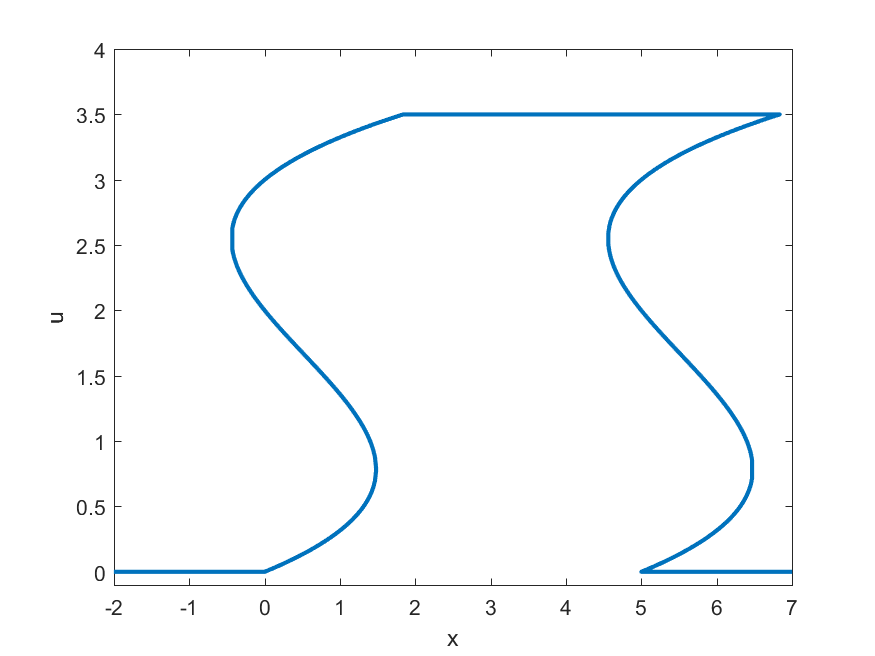}
\includegraphics[width=40mm,height=30mm]{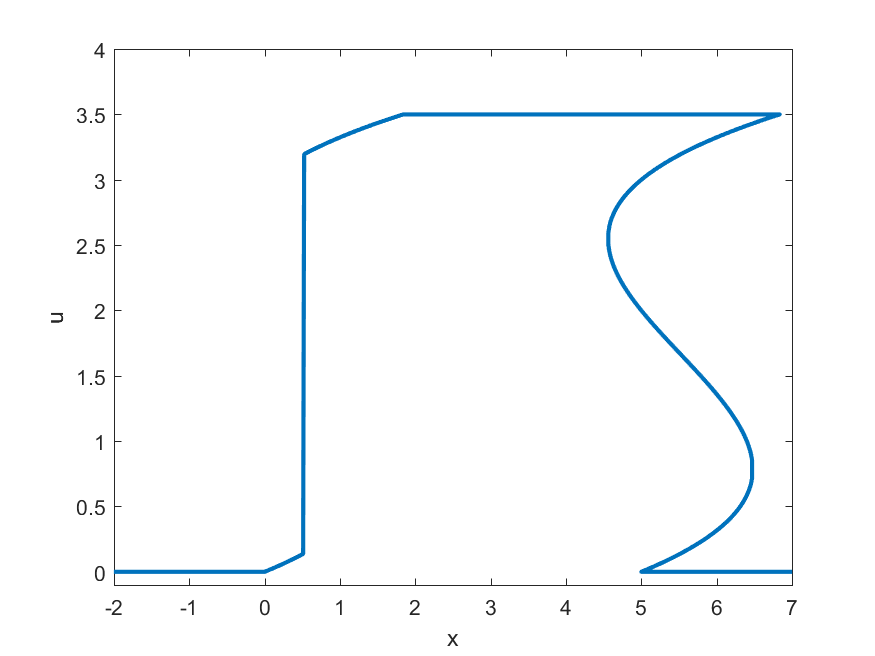}
\includegraphics[width=40mm,height=30mm]{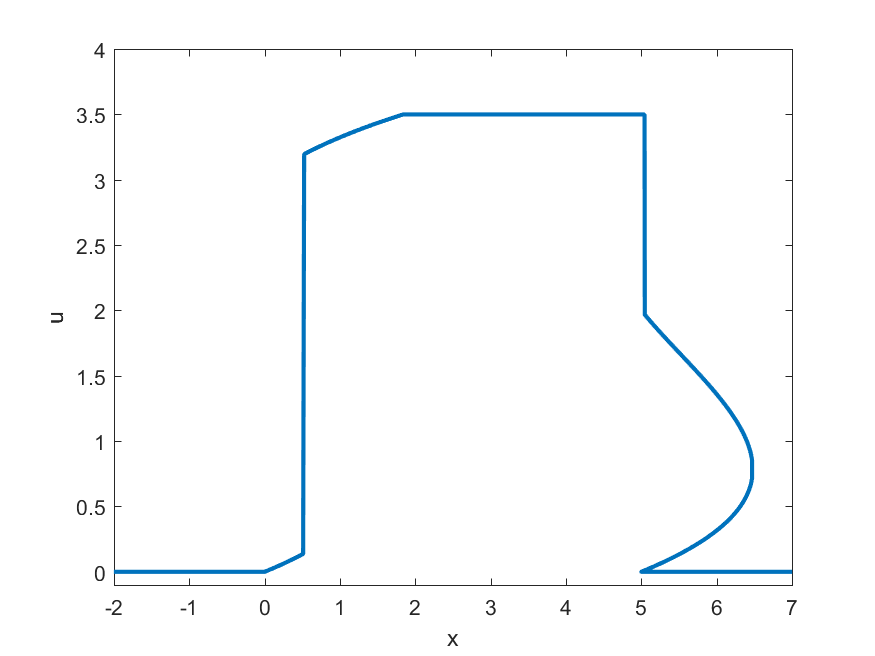}
\includegraphics[width=40mm,height=30mm]{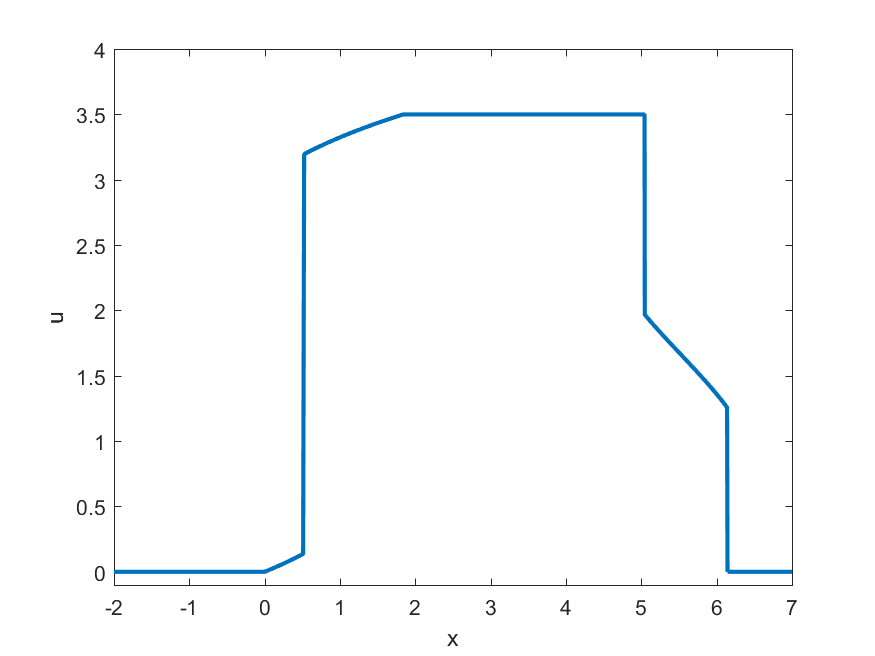}
\end{center}
\caption{Step by step application of the Generalized Equal-Area Principle for Example \ref{Example5}.}
\label{Example5Plots}
\end{figure}

\begin{figure}[!ht]
\begin{center}
\includegraphics[width=60mm,height=40mm]{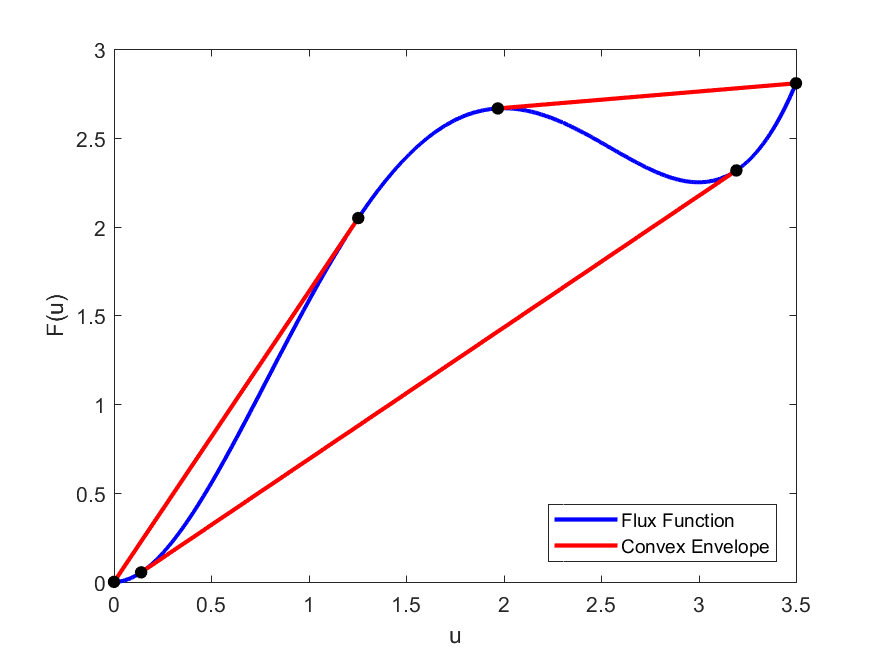}
\end{center}
\caption{Convex hull of the graph of the flux function given in Example \ref{Example5}.}
\label{Example5Envelope}
\end{figure}

\end{example}

\newpage
\section{Discussion}\label{Discussion} 
In this paper we set out to apply the area-preserving parametric interpolation framework of \cite{mcgregor2019area} to 1-D scalar conservation laws with non-convex flux functions. In Section \ref{PIF} we observed that when the parametric curve (\ref{ParCurve}) becomes multi-valued, a generic equal-area projection is insufficient, as uniqueness of equal-area solutions is lost in the non-convex case, as shown in Figure \ref{NonConRiemann2}. Using the fact that the convex envelope of the flux function can be used to construct weak solutions which satisfy the Oleinik entropy conditions \cite{Oleinik}, we describe a basic algorithm for constructing the upper and lower convex hulls. Leveraging the connection between secant lines of the convex hull of the flux function and equal-area curves of (\ref{ParCurve}), we derived the generalized equal-area principle algorithm. In Section \ref{Numerics} we showed that this algorithm indeed selects the appropriate upper or lower convex envelope (depending on the states $u_L$ and $u_R$) automatically. Additionally, when interpolating (\ref{ParCurve}) with the area-preserving interpolation of \cite{mcgregor2019area}, we indeed achieve fifth order accuracy in shock position.

The method presented here is high order, exactly conservative, flexible and selects weak solutions satisfying the Oleinik entropy criteria. Although this work focused on the Riemann problem, the framework is applicable to general piecewise-smooth initial data as well.  With smooth initial data, shocks form when the curve overturns and becomes multi-valued. In this situation the generalized equal-area principle is applied in the same way, therefore guaranteeing the entropy conditions are satisfied. At its core the method is very simple: Flow particles under the characteristic flow, interpolate the data using high order parametric interpolation, if the curve overturns, perform the correct projection in accordance with the generalized equal-area principle and then continue. This approach is valid for lower order interpolation as well, if a simple implementation is desired, however the conservative nature and high order accuracy of the area-preserving parametric interpolation is ideal.   For these reasons we believe this method can be a valuable tool for simulation of 1-D scalar conservation laws with non-convex flux functions, such as traffic flow or Buckley-Leverett type equations for two-phase flow. Looking forward we are interested in exploring the non-homogeneous case with non-convex flux functions. As discussed in \cite{mcgregor2019parametric}, we know the equal-area principle fails to yield the correct weak solution in the convex case, therefore a modification to the methods discussed here will be required to obtain a high order methods for non-convex problems with source terms.
\bibliographystyle{abbrv}
\bibliography{references}

\end{document}